\tikzset{%
  symbol/.style={
    draw=none,
    every to/.append style={
      edge node={node [sloped, allow upside down, auto=false]{$#1$}}
    },
  },
}
\newtheorem{theorem}{Theorem}
\newtheorem{example}{Example}
\newtheorem{question}{Question}
\newtheorem{proposition}[theorem]{Proposition}
\newtheorem{corollary}[theorem]{Corollary}
\newtheorem*{summary*}{Summary}
\theoremstyle{definition}
\newtheorem{definition}{Definition}
\newtheorem*{"definition"}{"Definition"}
\theoremstyle{remark}
\newtheorem{remark}{Remark}
\numberwithin{equation}{section}
\begin{document}

\title{Quantum Fibrations: quantum computation on an arbitrary topological space}


\author{Kazuki Ikeda}
\address{}
\curraddr{}
\email{kazuki7131@gmail.com}
\address{Department of Mathematics and Statistics $\&$ Centre for Quantum Topology and Its Applications (quanTA), University of Saskatchewan, Canada}
\address{Co-design Center for Quantum Advantage (C2QA) $\&$ Center For Nuclear Theory, Department of Physics and Astronomy, Stony Brook University, USA}


\date{}

\dedicatory{}

\begin{abstract}
Using von Neumann algebras, we extend the theory of quantum computation on a graph to a theory of computation on an arbitrary topological space.
\end{abstract}

\maketitle
\setcounter{tocdepth}{1}
\tableofcontents
\section{Introduction}
\subsection{Quantum Computation}
Throughout this article, any Hilbert space we consider is separable and complex. All operators on any Hilbert space are assumed to be trace class and bounded. Let $B(\mathcal{H})$ be the set of all bounded operators on a Hilbert space $\mathcal{H}$. For $A\in B(\mathcal{H})$, we write its conjugate as $A^*$. Let $D(\mathcal{H})=\{\rho\in B(\mathcal{H}):\Tr\rho=1,\rho^*=\rho,\rho\ge0\}$ be the set of all density operators (quantum states) acting on $\mathcal{H}$. Let $\rho$ be an endomorphism of $\mathbb{C}^{2^n}$ such that $\Tr\rho=1$, $\rho^*=\rho$ and $\rho\ge0$. Such an operator $\rho$ is called a quantum state or density operator. A quantum state is called pure if $\Tr(\rho^2)=1$; otherwise, it is called mixed. Let $\{E^k_{j}\}_{k=1,j=1}^{K,J_k}$ be a set of endomorphisms (Kraus operators) of $\mathbb{C}^{2^n}$ such that
\begin{equation}
\label{eq:Kraus}
    \sum_{k=1}^K\sum_{j=1}^{J_k}(E^k_j)^* E_j^k=I_{2^n}.
\end{equation}
The measurement of the operator $\rho$ is defined using $\{E^k_{j}\}_{k=1,j=1}^{K,J_k}$ and the result $k$ can be obtained with probability
\begin{equation}
\Tr\left(\sum_{j=1}^{J_k}E^k_j\rho(E^k_j)^*\right).
\end{equation}
If the measurement result is $k$, the state $\rho$ is transformed into another state
\begin{equation}
\rho'=\frac{\sum_{j=1}^{J_k}E^k_j\rho(E^k_j)^*}{\Tr\left(\sum_{j=1}^{J_k}E^k_j\rho(E^k_j)^*\right)}.
\end{equation}
Quantum computation $\left(\rho_{in},\{U_{\lambda}\}_{\lambda\in\Lambda},\{E^k_{j}\}_{k=1,j=1}^{K,J_k}\right)$ is defined by an initial quantum state $\rho_{in}$, a family of unitary operators $\{U_\lambda\}_{\lambda\in\Lambda}$ and Kraus operators $\{E^k_{j}\}_{k=1,j=1}^{K,J_k}$. The initial state $\rho_{in}$ which is updated by sequentially applying unitary operators $\{U_\lambda\}_{\lambda\in\Lambda}$ as $\rho_{in}\mapsto U_\lambda\rho_{in}U^*_\lambda$. Quantum computation is called universal if any $2^n$-dimensional unitary operator can be approximated using several operators from $\{U_\lambda\}_{\lambda\in\Lambda}$.

The physical system that performs quantum computation is realized by graphically arranging $n$ artificial atoms, called qubits. Each qubit is an unit element of $\mathbb{C}^2$. Although theoretical methods for realizing universal quantum computation are well established, it is also well known that it is very difficult to simulate arbitrary physical processes by lattice computation. To explain this issue more precisely, let us illustrate the general procedure of quantum computation.

Let $(\Omega,\mathscr{F},P)$ be an arbitrary probability space where a given problem is defined. It can be solved by quantum computation in the following way:
\begin{itemize}
    \item[1.] Assign $\mathbb{C}^{2^{n}}$ and $\left(\rho_{in},\{U_{\lambda}\}_{\lambda\in\Lambda},\{E^k_{j}\}_{k=1,j=1}^{K,J_k}\right)$, where each $U_\lambda$ is generated by $n$-qubit Pauli operators. 
    \item[2.] For a given $\epsilon>0$, find an embedding map $f:\Omega\to \left(\rho_{in},\{U_{\lambda}\}_{\lambda\in\Lambda},\{E^k_{j}\}_{k=1,j=1}^{K,J_k}\right)$ such that for all $\omega\in\Omega$, there exists $i,\mu$ that satisfy $\left|P(\omega)-\Tr(\sum_{j=1}^{J_i}E^i_jU_\mu\rho_{in}U^*_\mu(E^i_j)^*)\right|<\epsilon$. 
\end{itemize}

The technical difficulties in this process are as follows:
\begin{itemize}
    \item[(A)] A sufficient number of qubits, which give $\mathbb{C}^{2^n}$, and operators $\left(\rho_{in},\{U_{\lambda}\}_{\lambda\in\Lambda},\{E^k_{j}\}_{k=1,j=1}^{K,J_k}\right)$ should be given to embed a problem and achieve a given precision $\epsilon$. 
    \item[(B)] An embedding map $f$ should be found so the original distribution $P(\omega)$ can be approximated by $n$-qubit Pauli operators within a given precision $\epsilon$. 
\end{itemize}
The former problem (A) is an experimental difficulty, while the latter (B) involves a theoretical difficulty. For example, to consider the Standard Model of elementary particles from a view point of lattice gauge theory, it is necessary to solve (circumvent) several "No-go theorems", such as the Nielsen-Ninomiya theorem, which states that right handed and left handed quarks and leptons appear in pairs, so chiral symmetry is not realized on a lattice. The construction of lattice fermions with exact chiral symmetry was circumvented by Neuberger's proposal of overlap fermions, but the problem of huge computational cost for simulations appeared. Quantum computation is expected to help reduce such computational costs, but it is generally very difficult to equivalently replace a problem on a space having cardinality of the continuum $2^{\aleph_0}$ (or greater cardinality) with a problem on a graph/lattice having cardinarity of $\aleph_0$ or less. When we say that quantum computation is universal, we mean that it can produce arbitrary probability distributions or unitary operators with arbitrary precision (if we are allowed to use sufficiently long time and a large amount of memory space), but nothing is told about how this is possible. 

\subsection{Statement of Main Results}
One of the finest aspects of quantum computation is that, in principle, it can approximate arbitrary quantum many-body systems defined on any topological space in a well-defined manner. Quantum mechanics is generally described by operators acting in an infinite-dimensional complex Hilbert space, but in quantum computation they can be approximated by a finite number of operators acting in a finite-dimensional complex linear space. This gives an advantage when simulating quantum field theory and quantum gravity. However, embedding quantum field theory involves technical difficulties, as mentioned earlier. To solve these problems, it is natural to consider a model of quantum computation based on the quantum theory of fields. However, in~\cite{freedman2002simulation}, it is reported that topological quantum field theory (TQFT) cannot be used to define a model of computation stronger than BQP. The above state of affairs prompts the following question.

\begin{question}
\label{thm:q0}
    How to define a computational theory that is more powerful than the conventional quantum computational model?
\end{question}

The most general framework that can address any quantum theory will be given as follows.

\vspace{5mm}
{\setlength{\leftskip}{\leftskip+5truemm}\setlength{\rightskip}{\rightskip+5truemm}%
\textit{Let $\mathscr{F}$ and $X$ be topological spaces, and $(\mathscr{F},\pi,X)$ be a triple that satisfies the following conditions: 
\begin{enumerate}
\item $\pi:\mathscr{F}\to X$ is continuous.
\item For each open cover $\{U_\lambda\}_{\lambda\in\Lambda}$ of $X$, $\pi^{-1}(U_\lambda)$ is a set of quantum states for every $\lambda\in \Lambda$.
\end{enumerate}
}\par}
\vspace{5mm}

First of all, in the following way, one can check this framework is the most general one that can address any quantum theory. As we see below, continuity of $\pi:\mathscr{F}\to X$ is not a strong condition. Given a topological space $(X,\mathcal{O}(X))$, for any set $\mathscr{F}$ and any map $\pi:\mathscr{F}\to X$, we can define a topology into $\mathscr{F}$ so that $\pi$ is a continuous map. Let $\mathcal{O}_\pi(\mathscr{F})=\{U\subset \mathscr{F}:\exists V\in\mathcal{O}(X),~U=\pi^{-1}(V)\}$ be a family of subsets of $\mathscr{F}$. Then $(\mathscr{F},\mathcal{O}_\pi(\mathscr{F}))$ is a topological space and $\pi:\mathscr{F}\to X$ is a continuous map. For the second condition, let $\mathscr{F}$ be the set of all quantum states on $X$ and $\pi:\mathscr{F}\to X$ be a surjection. Then it is possible to simulate/approximate any quantum state of any quantum theory on $X$. 

Given such a triple $(\mathscr{F},\pi,X)$, one can define and run a problem by applying algebraic operators to each $\pi^{-1}(U)$. Let us consider how we can program each $\pi^{-1}(U)$ to solve a problem. To do so, we first decide what kind of quantum state we will address. Since all experimentally observed physical quantities are of finite value, it is sufficient to consider bounded operators.
Therefore we use a von Neumann algebra ($W^*$-algebra) to embed the problem into $\mathscr{F}$ and write a program. Here a von Neumann algebra is a weakly closed $^*$-algebra of bounded operators on a Hilbert space and contains the identity operator. Note that quantum mechanics includes not only bounded operators but also non-bounded operators. Let $\mathcal{A}(U)$ be a von Neumann algebra on an open set $U$ of $X$. It will act on $\pi^{-1}(U)$ as follows. Suppose a initial quantum state $\rho_U\in\mathcal{A}(U)$ is given on $U$. Then one can update it by applying a family of unitary operators $\{a_\lambda\}_\lambda\subset\mathcal{A}(U)$ to $\rho_U$ in such a way that $\rho_U\mapsto a_\lambda \rho_U a^*_U$. One will be able to simulate a local behavior on an open subset $V\subset U$, by restricting $\mathcal{A}(U)$ to $V$, by which one will obtain a subalgebra $\mathcal{A}(V)\subset\mathcal{A}(U)$. In order to extend $\rho_U$ to a state on $W\supset U$, one could do so by embedding $\mathcal{A}(U)$ to $\mathcal{A}(W)$. This extension includes the notion of a tensor product of quantum states. If we can give such an algebraic system to an arbitrary open set of $X$ and reproduce arbitrary quantum states, we can call it a true universal quantum computation. When those operations are defined on a triple $(\mathscr{F},\pi,X)$, we call it \textbf{quantum fibration} (Definition~\ref{def:fibration}). This is clearly a natural extension of conventional quantum computation, which discretely embeds a problem into each fiber $\mathbb{C}^{2^n}$ using $n$-qubit Pauli operators.

Another motivation for the author in developing the theory is as follows:

\vspace{5mm}
{\setlength{\leftskip}{\leftskip+5truemm}\setlength{\rightskip}{\rightskip+5truemm}%
\textit{This theory is a well-defined and non-perturbative quantum theory on any topological space.}\par}
\vspace{5mm}

Conventional quantum theory tries to find a Hamiltonian or Lagrangian a priori to solve a problem, whereas quantum computation finds an algebraic system that can be programmed to reproduce a quantum system to solve a problem. Moreover algebraic operations are well-defined, and if the quantum computation is universal, it can reproduce any state, regardless of non-perturbative/perturbative.

\vspace{5mm}
Since quantum computation involves classical computation, we come to the following natural question:
\begin{question}
\label{thm:q0-1}
How to address classical information with a quantum fibration? 
\end{question}
The answer to the above question is given in Definition~\ref{def:semiclassical}. We propose the notion of \textbf{semi-classical operations of von Neumann algebra}. By restricting each fiber to semi-classical orbits, we would be able to handle semi-classical systems. Quantum and classical states can be distinguished by the presence or absence of quantum correlations such as entanglement. Density operators without entanglement are called separable states and are regarded as classical states. The strength of the entanglement can be measured with what is called an entanglement measure (cf. Example~\ref{ex:EEmeasure}.) For example we can define a semi-classical operator by a von Neumann algebra such that it does not increase quantum correlation. In Theorem~\ref{def:semiclassical} we verify that any semi-classical von Neumann algebra is indeed closed in classical system. 

As some examples of our theory, we address algebraic quantum field theory in Section~\ref{sec:AQFT} and quantum chemistry in Section~\ref{sec:chemi}.

\vspace{5mm}
\hspace{-4mm}\textit{Acknowledgements.} I gratefully acknowledge people at Stony Brook University especially Dmitri Kharzeev, Edward Shuryak, Derek Teaney, Raju Venugopalan, Jacobus Verbaarschot, Ismail Zahed for helpful and stimulating discussions. I thank Pablo Basteiro, Ioannis Matthaiakakis, Rene Meyer at University of W\"{u}erzburg, Adam Lowe and Yoshiyuki Matsuki for useful discussions and collaboration. I also thank Steven Rayan for his encouragement and supervision at University of Saskatchewan. This work was supported in part by Pacific Institute for the Mathematical Science (PIMS) postdoctoral fellowship award and by the U.S. Department of Energy, Office of Science, National Quantum Information Science Research Centers, Co-design Center for Quantum Advantage (C2QA).

\section{Orbits and Quantum Operations}
\label{sec:orbit}
Relaxing the condition of density operators, we define the 
following of positive Hermitian operators as
\begin{equation}
    \widetilde{D}(\mathcal{H})=\{\rho\in B(\mathcal{H}):\rho^*=\rho,~\rho\ge0,\Tr\rho>0\}. 
\end{equation}
Any element $\rho$ of $\widetilde{D}(\mathcal{H})$ can be identified with a density operator by dividing by the trace $\rho/\Tr\rho$. So $\widetilde{D}(\mathcal{H})$ is the set of all quantum states before normalizing.  

Let $\mathcal{A}$ be a $W^*$-algebra (von Neumann algebra), which is a weakly closed $^*$-algebra of bounded operators on a Hilbert space $\mathcal{H}$ and contains the identity operator. Equivalently, a set $\mathcal{A}\subset B(\mathcal{H})$ is called a von Neumann algebra if $(\mathcal{A}')'=\mathcal{A}$, where
\begin{equation}
\label{eq:vonNeumann}
    \mathcal{A}'=\{a\in B(\mathcal{H}): ab-ba=0~\forall b\in\mathcal{A}\}.
\end{equation}

We define a subset of $\mathcal{A}$ by
\begin{equation}
\label{eq:subalgebra}
    \check{\mathcal{A}}=\left\{a\in \mathcal{A}: \forall\rho\in \widetilde{D}(\mathcal{H}),~a\rho a^*\in\widetilde{D}(\mathcal{H})\right\}.
\end{equation}
Moreover let $\mathcal{R}=\{a\in\mathcal{A}: \exists a^{-1}\in\mathcal{A},a^{-1}a=aa^{-1}=e\}$ denote the set of all regular elements of $\mathcal{A}$. 

\begin{proposition}
\label{eq:selfadjoint}
  For any von Neumann algebra $\mathcal{A}$, $\check{\mathcal{A}}$ \eqref{eq:subalgebra} is a self-adjoint set. $\check{\mathcal{A}}\cap\mathcal{R}$ is a group. 
\end{proposition}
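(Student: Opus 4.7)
The plan is first to derive a concrete membership criterion for $\check{\mathcal{A}}$ and then use it to dispatch both assertions in one shot. Given $a\in\mathcal{A}$ and $\rho\in\widetilde{D}(\mathcal{H})$, the operator $a\rho a^*$ is manifestly self-adjoint and positive, so the only nontrivial clause of $a\rho a^*\in\widetilde{D}(\mathcal{H})$ is $\Tr(a\rho a^*)>0$. Cyclicity rewrites this as $\Tr(a^*a\,\rho)>0$, and writing $\rho=\rho^{1/2}\rho^{1/2}$ further rewrites it as $\|a\rho^{1/2}\|_{\mathrm{HS}}^{2}>0$. Testing with a rank-one state $\rho=|\psi\rangle\langle\psi|$ forces $\|a\psi\|>0$ whenever $\psi\neq 0$, so $\ker a=\{0\}$; conversely, if $a$ is injective then $a\rho^{1/2}\neq 0$ for every nonzero positive $\rho^{1/2}$, so the Hilbert--Schmidt norm is strictly positive. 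This yields the clean characterization
\[
\check{\mathcal{A}}=\{a\in\mathcal{A}:\ker a=\{0\}\}.
\]

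From this, self-adjointness reduces to the equivalence $\ker a=\{0\}\iff\ker a^*=\{0\}$. I would argue via the identities $\ker a=\ker(a^*a)$ and $\ker a^*=\ker(aa^*)$, each a one-line consequence of $\|a\psi\|^2=\langle\psi,a^*a\psi\rangle$. In the finite-dimensional setting implicit in the paper's blanket trace-class assumption, $a^*a$ and $aa^*$ share the same nonzero spectrum with identical multiplicities (via the singular value decomposition or polar decomposition), so their kernels have the same dimension and both vanish simultaneously, giving $a\in\check{\mathcal{A}}\iff a^*\in\check{\mathcal{A}}$. For the second assertion, $\mathcal{R}$ is by definition the group of units of the unital algebra $\mathcal{A}$, hence already a group under multiplication; since every invertible element is injective, $\mathcal{R}\subseteq\check{\mathcal{A}}$, so $\check{\mathcal{A}}\cap\mathcal{R}=\mathcal{R}$ inherits the group structure, with $e$ present, closure under products from $(ab)^{-1}=b^{-1}a^{-1}$, and closure under inversion all immediate.

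The step I expect to be the main obstacle is the implication $\ker a=\{0\}\Rightarrow \ker a^*=\{0\}$ at the heart of self-adjointness. It is automatic in finite dimensions but fails for a general separable Hilbert space, as the unilateral shift $S$ satisfies $\ker S=\{0\}$ while $\ker S^*\neq\{0\}$. The proof therefore leans essentially on the paper's convention that all operators are trace class, which pins the setting down to one where $a^*a$ and $aa^*$ have equal kernel dimensions. Modulo this convention, every remaining step is a routine bookkeeping calculation.
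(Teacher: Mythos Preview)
Your proposal is correct and follows a genuinely different, more concrete route than the paper. The paper's argument for self-adjointness is essentially an assertion: it says one can ``confirm by checking'' that $i_{a^*}(\rho)\in\widetilde{D}(\mathcal{H})$ without ever indicating why, and for the group claim it works through the action, verifying $i_{ab}=i_ai_b$, $i_e=\mathrm{Id}$, and that $i_{a^{-1}}$ undoes $i_a$. You instead extract the explicit characterization $\check{\mathcal{A}}=\{a\in\mathcal{A}:\ker a=\{0\}\}$ and reduce both assertions to elementary linear algebra. Your observation that every invertible element is injective, hence $\mathcal{R}\subseteq\check{\mathcal{A}}$ and $\check{\mathcal{A}}\cap\mathcal{R}=\mathcal{R}$ is already the group of units, is a cleaner route than the paper's verification via the action.

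You are also right to isolate $\ker a=\{0\}\Rightarrow\ker a^*=\{0\}$ as the crux. This implication fails on an infinite-dimensional $\mathcal{H}$ even for trace-class $a$ (e.g.\ the weighted shift $ae_n=n^{-2}e_{n+1}$ has trivial kernel but $a^*e_1=0$), so the proposition as stated is actually false there; the paper's proof does not address this at all. Your reading that the blanket convention ``all operators are trace class'' forces $\dim\mathcal{H}<\infty$ (since the identity of a von Neumann algebra must then be trace class) is the only way to make the statement honest, and once granted, your rank argument via the singular value decomposition goes through.
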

\begin{proof}
This can be confirmed by checking that, for any $a\in\check{\mathcal{A}}$ and for any $\rho\in\widetilde{\mathcal{D}}(\mathcal{H})$, $a^*$ also satisfies $i_{a^*}(\rho)\in\widetilde{\mathcal{D}}(\mathcal{H})$. Hence any $a^*$ is an element of $\check{\mathcal{A}}$. Therefore $\check{\mathcal{A}}={\check{\mathcal{A}}}^*=\{a^*,a\in\check{\mathcal{A}}\}$, which means $\check{\mathcal{A}}$ is self-adjoint by definition.

Obviously $\check{\mathcal{A}}$ contains the identity operator $e$, which sends any element of $\widetilde{\mathcal{D}}(\mathcal{H})$ to itself $(e:\rho\mapsto \rho)$. Then for any $a\in\check{\mathcal{A}}\cap\mathcal{R}$ and any $\rho\in\widetilde{\mathcal{D}}(\mathcal{H})$, $a\rho a^*$ is in $\widetilde{\mathcal{D}}(\mathcal{H})$ and $\check{\mathcal{A}}\cap\mathcal{R}$ is closed by the operation $a\rho a^*\mapsto \rho$. Therfore $a^{-1}$ is also an emelemt of $\check{\mathcal{A}}\cap\mathcal{R}$. 

We define the action (quantum operation) by $i_a(\rho)=a\rho a^*$ 
\begin{equation} \left(\check{\mathcal{A}},\widetilde{D}\right)\ni(a,\rho)\mapsto i_a(\rho)\in\widetilde{D}.
\end{equation}
It is clear that the following properties are satisfied:
\begin{align}
    \begin{aligned}
        i_{ab}&=i_ai_b,~\forall a,b\in\check{\mathcal{A}}\\
        i_e&=\text{Id}_{\tilde{D}}\\
        i_ai_{a^{-1}}&=i_{a^{-1}}i_a=\text{Id}_{\tilde{D}},~\forall a\in\check{\mathcal{A}}\cap\mathcal{R}
    \end{aligned}
\end{align}
Hence $\check{\mathcal{A}}\cap\mathcal{R}$ is a group. 
\end{proof}

Moreover it is also clear that 
\begin{equation}
i_a(\rho+\sigma)=a(\rho+\sigma)a^*=(a\rho a^*)+(a\sigma a^*)=i_a(\rho)+i_a(\sigma). 
\end{equation}

For a given $\rho\in\widetilde{D}(\mathcal{H})$ and $a\in\check{\mathcal{A}}$, we have an orbit 
\begin{equation}
    O_{\check{\mathcal{A}}}(\rho)=\left\{\frac{i_a(\rho)}{\Tr(i_a(\rho))}:a\in \check{\mathcal{A}}\right\}. 
\end{equation}
We call it a quantum orbit with an input $\rho$. Note that any von Neumann algebra on a separable Hilbert space has a countable number of generators.

Quantum computation based on this definition of quantum circuits includes not only gate-based quantum computation, but also adiabatic quantum computation and quantum annealing. A quantum gate computation consists of a countable set of elements of $\check{\mathcal{A}}$. In the case of quantum annealing and adiabatic quantum computation, it is based on a one-parameter family $\{a_t\}_{0\le t\le 1}\subset\check{\mathcal{A}}$. 

\begin{example}
One shall find that $\check{\mathcal{A}}$ includes operators for measurement and communication. In the conventional quantum computation, one can create measurement operator $M_m$ by a Kraus operator~\eqref{eq:Kraus} in such a way that 
\begin{equation}
\label{eq:POVM}
M_m=\sum_{j=1}^{J_m}(E^m_j)^* E_j^m.  
\end{equation}
The measurement operator $M_m$, which is called a positive-operator valued measure (POVM), acts on a normalized quantum state $\rho\in D(\mathcal{H})$ and a result $k$ can be obtained with probability 
\begin{equation}
    \Tr(M_k\rho).
\end{equation}
\end{example}

\begin{definition}
\label{def:quantum_universal}
An algebra $\check{\mathcal{A}}$ defined by \eqref{eq:subalgebra} is called {\textbf{universal}} if it acts transitively on $\mathcal{D}$, namely 
\begin{equation}
    O_{\check{\mathcal{A}}}(\rho)=D(\mathcal{H})
\end{equation}
is satisfied for any $\rho$ in $D(\mathcal{H})$.
\end{definition}

In practice, technical reasons may limit input states that can be prepared. So, let $\mathcal{D}\subset\widetilde{D}(\mathcal{H})$ be a set of states, and write the set of all orbits that can be generated using the given algebra $\check{\mathcal{A}}$ as follows 
\begin{equation}
    \mathcal{Q}(\mathcal{D}, \check{\mathcal{A}},\mathcal{H})=\bigcup_{\rho\in \mathcal{D}}O_{\check{\mathcal{A}}}(\rho).
\end{equation}
We call this a programmed quantum system.

\section{Quantum Fibrations and Quantum Networks}
\subsection{Quantum Fiberations}
\begin{definition}
\label{def:fibration}
Let $\mathscr{F}$ and $X$ be topological spaces and $\pi:\mathscr{F}\to X$ be a continuous map. We call $(\mathscr{F},\pi,X)$ a \textbf{quantum fiber space or quantum fibration} when every non-empty fiber $\mathscr{F}_x=\pi^{-1}(x)$ is a programmed quantum system for every $x\in X$. 
\end{definition}
Here, we will admit the existence of fibers that are empty sets. Those empty fibers correspond to the fibers vanished by annihilation operators of particles. The definition of Fibration usually employs the covering homotopy property, but we do not. There are several reasons for this, for example, the homotopy of the fiber is not always readily apparent in any given quantum system. Moreover if $p:E\to X$ is a Serre fibration on a pathwise connected space $X$, then $p^{-1}(x)$ and $p^{-1}(y)$ are homotopy equivalent for all $x,y\in X$. However, general quantum systems do not always satisfy this property. For example, in the most general two-qubit system, pure states are parametrized by $S^7$ and a mixed states are parametrized by $SU(4)$, but $S^7$ and $SU(4)$ are not homotopy equivalent. General quantum systems with fibers such that a fiber at one point is consists of pure states and a different fiber at another point consists of mixed states cannot be handled by the conventional fibrations.

Now let us consider quantum computation with a quantum fibration which consists of the following data:
\begin{enumerate}
\item For each open set $U\subset X$, let $\mathcal{A}(U)$ be a von Neumann algebra such that $\mathcal{A}(V)\subset \mathcal{A}(U)$ for all open sets $V\subset U$.
\item A density operator $\rho_U$ on $U$ is a trace-class operator of $\mathcal{A}(U)$ is defined in such a way that $\rho_U=\rho^*_U,\rho_U\ge0,\Tr(\rho_U)=1$, where $1$ is the identity operator of $\mathcal{A}(U)$. 
\item There exists a restriction map $r$ such that, for any open set $V\subset U$, $r_{UV}(\rho_U)$ is a density operator on $V$. 
\item Let $\Lambda$ be an ordered set. The time-evolution of $\rho_U$ is given by a family $\{U_\lambda\}_{\lambda\in\Lambda}$ of unitary operator $U_\lambda U^*_\lambda=1$ in such a way that 
\begin{equation}
    \rho_U\mapsto U_\lambda\rho_U U^*_\lambda \mapsto U_{\lambda'}U_\lambda\rho_U U^*_\lambda U^*_{\lambda'}~~\lambda<\lambda'\in\Lambda.
\end{equation}
\end{enumerate} 
The pair $(\rho_U,\{U_\lambda\}_{\lambda})$ generates an orbit 
\begin{equation}
    O_{\check{\mathcal{A}}(U)}=\{i_{U_\lambda}(\rho_U):\lambda\in\Lambda\}.
\end{equation}
There are two ways to define an orbit on each single point $x\in X$. One is simply to chose a von Neumann algebra $\mathcal{A}_x$ and construct a von Neumann algebra $\mathcal{A}(U)$ on an open neighber $U$ of $x$ in such a way that $\mathcal{A}_x$ is a subalgebra of $\mathcal{A}(U)$. In this case, the restriction map is not given a priori but is defined to be consistent with the algebraic structure on each open subset. Another way to construct an orbit on $x\in X$ is to use the direct limit
\begin{equation}
    \mathcal{A}_x=\lim_{U\to x}\mathcal{A}(U). 
\end{equation}
In this case, a restriction map and algebraic structures on each open neighbor of $x$ are given a priori.   

Once we assign an orbit for each open set $U$ of $X$ with an initial state $\rho_U$, we can define a quantum fibration $(\mathscr{F},\pi,X)$ with continuous map
\begin{equation}
    \pi:\mathscr{F}\to X
\end{equation}
such that $\pi^{-1}(U)=O_{\mathcal{A}_U}(\rho_U)$. 

\begin{example}
\label{ex:qubits}
To find a connection with the conventional quantum computation, let us consider a discrete set $X=\{1,2,3\}$ with the discrete topology $\mathcal{O}_X=\{\emptyset,\{1\}, \{2\},\{3\},\{1,2\}, \{1,3\}, \{2,3\},\{1,2,3\}\}$. This is a system of three qubits which are defined on $1$, $2$ and $3$ in $X$. Single qubit unitary operators can be defined on each of $\{1\},\{2\},\{3\}$, two qubit unitary operators can be defined on $\{1,2\},\{1,3\},\{2,3\}$ and three qubit operations are defined on $\{1,2,3\}$. A restriction map can be interpreted as, for example, partial trace and projection. Note that $B\left(\mathbb{C}^{2^n}\right)=M_{2^n}(\mathbb{C})~(n=3)$ is a von Neumann algebra and all unitary operators acting on qubits in $X$ are elements of $B\left(\mathbb{C}^{2^n}\right)$.
\end{example}

To simulate theories on a connected space, it is natural to consider a space in which there are an infinite number of qubits. For example, this would be the case when considering the $N\to\infty$ limit of $\text{SU}(N)$ gauge theory. Let us check that our theory can also address the quantum computation theory with an infinite number of qubits. To this end, let us discuss tensor products of infinite Hilbert spaces. Let $\{\mathcal{H}_i\}_{i=1,2,\cdots}$ be a sequence of Hilbert spaces and $\{e_i\}_{i=1,2,\cdots}$ be a sequence of their unit vectors $(e_i\in\mathcal{H}_i)$. For each $\bigotimes_{i=1}^n\mathcal{H}_i$, the embedding map 
\begin{equation}
\label{eq:embed}
\psi\in\bigotimes_{i=1}^n\mathcal{H}_i\mapsto \psi\otimes e_{n+1}\in\bigotimes_{i=1}^{n+1}\mathcal{H}_i
\end{equation}
is an isometry map. The direct limit of the direct system $\{\bigotimes_{i=1}^n\mathcal{H}_i\}_{i=1,2,\cdots}$ defined in this way is a pre-Hilbert space, whose completion with $\{e_i\}_{i=1,2,\cdots}$ is called the infinite tensor product of $\{\mathcal{H}_i\}_{i=1,2,\cdots}$. We write it as $\bigotimes_{i=1}^\infty(\mathcal{H}_i,e_i)$. When a von Neumann algebra $\mathcal{A}_i$ is given for each Hilbert space $\mathcal{H}_i$, one can embed $\widetilde{\mathcal{A}_n}=\bigotimes_{i=1}^n\mathcal{A}_i$ into $\bigotimes_{i=1}^{n+1}\mathcal{A}_i$ in such a way that
\begin{equation}
    \widetilde{\mathcal{A}_n}\ni A\mapsto A\otimes I\in\widetilde{\mathcal{A}_n}\otimes\mathcal{A}_{n+1}=\widetilde{\mathcal{A}_{n+1}},
\end{equation}
by which $\widetilde{\mathcal{A}_n}$ is regarded as a subalgebra of $\widetilde{\mathcal{A}_{n+1}}$. This operation is commutative with the embedding \eqref{eq:embed}, hence we can embed $\widetilde{\mathcal{A}_n}$ into $B\left(\bigotimes_{i=1}^\infty(\mathcal{H}_i,e_i)\right)$.
The von Neumann algebra on $\bigotimes_{i=1}^\infty(\mathcal{H}_i,e_i)$ determined in this way is called the infinite tensor product 
\begin{equation}
\label{eq:infinite_tensor}
\mathcal{A}(\{e_i\}_{i})=\bigotimes_{i=1}^\infty\mathcal{A}_i    
\end{equation}
of $\{\mathcal{A}_i\}_{i=1,2,\cdots}$ with respect to $\{e_i\}_{i=1,2,\cdots}$. One important remark is that even when each $\mathcal{A}_i$ is a set of all complex square matrices $(\mathcal{A}_i=M_{m_i}(\mathbb{C}))$, there are uncountably many different infinite tensor products of von Neumann algebras and, depending on a choice $\{e_i\}_{i=1,2,\cdots}$, $\mathcal{A}(\{e_i\}_{i})$ can be type I, II, and III.

\subsection{Quantum Network and Interactions among Fibers}
All operations in all quantum systems can be viewed as communication channels if we view the initial state as the input state and the final state as the output state. A network is an extension of the communication channels to the entire system. A map sending a quantum state to anther quantum state is called a \textbf{quantum channel}. Let $\mathcal{H}_1,\mathcal{H}_2$ be two Hilbert spaces and $H(\mathcal{H}_1)=\{\eta\in B(\mathcal{H}_1):\eta^*=\eta\}$ be the set of all hermitian operators. We call $\Lambda:\text{End}(\mathcal{H}_1)\to\text{End}(\mathcal{H}_2)$ a quantum channel if it satisfies the following properties.
\begin{itemize}
    \item $\Lambda(a\eta_1+b\eta_2)=a\Lambda(\eta_1)+b\Lambda(\eta_2)$ for all $\eta_1,\eta_2$ in $H(\mathcal{H}_1)$ and for all real numbers $a,b$.
    \item $\Lambda(\eta)\ge0$ for any $\eta$ in $H(\mathcal{H}_1)$ such that $\eta\ge0$.
    \item $\Tr(\Lambda(\eta))=\Tr\eta$ for any $\eta$ in $H(\mathcal{H}_1)$.
    \item For any Hilbert space $\mathcal{H}_3$ and for any $\tilde{\eta}$ in $H(\mathcal{H}_1\otimes\mathcal{H}_3)$, if $\tilde{\eta}\ge0$, then  $\Lambda\otimes I(\tilde{\eta})\ge0$.
\end{itemize}

To elaborate more on the fourth property of a quantum channel in a general way, let $\mathcal{A}$ be a von Neumann algebra defined on $\mathcal{H}_1$. Then the tensor product of $\mathcal{A}$ and $B(\mathcal{H}_2)$ corresponds to 
\begin{equation}
    \mathcal{A}\otimes B(\mathcal{H}_2)=\{a\in B(\mathcal{H}_1\otimes\mathcal{H}_2):a_{ij}\in\mathcal{A}\},
\end{equation}
where $a_{ij}$ is defined by equation~\eqref{eq:matrix}. From this, it is straightforward to see that $B(\mathcal{H}_1)\otimes B(\mathcal{H}_2)=B(\mathcal{H}_1\otimes\mathcal{H}_2)$. Especially, when $\mathcal{H}_2=\mathbb{C}^n$, we have $B(\mathcal{H}_1\otimes\mathbb{C}^n)=B(\mathcal{H}_1)\otimes M_n(\mathbb{C})$. Therefore $a\in B(\mathcal{H}_1\otimes\mathbb{C}^n)$ is an $n\times n$ matrix $a=(a_{ij})$ whose components $a_{ij}$ are in $B(\mathcal{H}_1)$. Thus the condition $\Lambda(\eta)\ge0~(0\le\eta\in H(\mathcal{H}_1))$ is not enough to guarantee $\Lambda\otimes I(\widetilde{\eta})\ge0~(0\le\widetilde{\eta}\in H(\mathcal{H}_1\otimes\mathcal{H}_2))$. The physical meaning of this condition is that when there are no interactions between $\mathcal{H}_1$ and $\mathcal{H}_2$, then events in system $\mathcal{H}_2$ do not affect system $\mathcal{H}_1$.

Let $\mathscr{F},\mathscr{G}$ be two quantum fibrations on $X$ and $Y$, respectively. Let $\mathcal{O}_X$ be the set of all open sets of $X$. For $U\in\mathcal{O}_X$ and $V\in\mathcal{O}_Y$, let $L(\mathscr{F}_U,\mathscr{F}_V)$ denote the set of all quantum channels from $\mathscr{F}_U $ to $\mathscr{G}_V$. We write 
\begin{equation}
    L(\mathscr{F},\mathscr{G})=\bigcup_{U\in \mathcal{O}_X}\bigcup_{V\in \mathcal{O}_Y}\{L(\mathscr{F}_U,\mathscr{G}_V)\}
\end{equation}
and call it \textbf{quantum network} from $\mathscr{F}$ to $\mathscr{G}$. $L(\mathscr{F},\mathscr{G})\cup L(\mathscr{G},\mathscr{F})$ is the bidirectional quantum network between $\mathscr{F}$ and $\mathscr{G}$. In algebraic quantum field theory (AQFT), the assignment $U\to \mathcal{A}(U)$ for each $U\in\mathcal{O}_X$ is called a net of local von Neumann algebras. Defining channels among local algebras allows them to interact and communicate, by which we mean it a network of local von Neumann algebras.

\section{Comparison with Theory of Quantum Computation}
\subsection{Commonalities}

\paragraph{\textbf{Measurement}}
Measurements can be defined in the same way as in the traditional theory of quantum computation. Let $\Omega$ be a set and $P_\Omega$ be the set of all subsets of $\Omega$. Let $\mathcal{F}\subset P_\Omega$ be a $\sigma$ algebra, namely it satisfies the following conditions:
\begin{itemize}
    \item $\Omega\in\mathcal{F}$
    \item $B\in\mathcal{F}\Rightarrow B^c=\{\omega\in\Omega:\omega\notin B\}\in\mathcal{F}$
    \item $B_i\in\mathcal{F}~(i=1,2,\cdots)\Rightarrow \bigcup_{i=1}^{+\infty}B_i\in \mathcal{F}$
\end{itemize}

Quantum measurement is generally defined by Positive-Operator Valued Measure (POVM). Again, let $\Omega$ be a set and $P_\Omega$ be a $\sigma$-algebra. We call a map $M:P_\Omega\to B(\mathcal{H})$ is a POVM if the following conditions are satisfied:
\begin{itemize}
    \item For all $B\in P_\Omega$, $M(B)$ is a trace-class self-adjoint positive semidefinite operator $(M(E)=M(E)^*\ge 0)$.
    \item $M(\Omega)=\text{id}_\mathcal{H}$
    \item $\Omega(\emptyset)=0$
    \item For all $B_i,B_j\in P_\Omega$ such that $B_i\cap B_j=\emptyset$, $M(\bigcup_{i}B_i)=\sum_iM(B_i)$. 
\end{itemize}
When a state (density operator) $\rho\in D(\mathcal{H})$ is measured with a POVM $M$, the probability that the measured value is contained in $B$ is given by $\Tr(M(B)\rho)$.

\vspace{3mm}
\paragraph{\textbf{Tensor Products of Quantum Gates}}
Constructing tensor products of multiple qubits is an important way to perform non-local quantum computation as well as to address entanglement. Let us confirm that our theory can consider such tensor products in the same way as before. Let $\mathcal{H}_1$ and $\mathcal{H}_2$ be two Hilbert spaces and $\mathcal{H}_1\otimes\mathcal{H}_2$ be the tensor product. For any $a_i\in B(\mathcal{H}_i)$, one can define $a_1\otimes a_2\in B(\mathcal{H}_1\otimes\mathcal{H}_2)$ uniquely by
\begin{equation}
    (a_1\otimes a_2)(x_1\otimes x_2)=a_1x_1\otimes a_2x_2. 
\end{equation}
With respect to given von Neumann algebras $\mathcal{A}_1\subset B(\mathcal{H}_1)$ and $\mathcal{A}_2\subset B(\mathcal{H}_2)$, their tensor product $\mathcal{A}_1\otimes\mathcal{A}_2$ is defined by the von Neumann algebra generated by 
\begin{equation}
    \{a_1\otimes a_2:a_1\in\mathcal{A}_1,a_2\in\mathcal{A}_2\}.
\end{equation}
While conventional quantum computation theory can only handle finite tensor products, it can be extended to infinite tensor products, as in equation~
\eqref{eq:infinite_tensor}.

Making tensor products allow us to do similar quantum operations in conventional quantum computation and quantum communication. Let $\{\psi_i\}_{i\in I}$ be a complete orthonormal system of $\mathcal{H}_2$. For each $i\in I$, we define an isometry $K_i:\mathcal{H}_1\to\mathcal{H}_1\otimes\mathcal{H}_2$ by
\begin{equation}
\label{eq:op_K}
    K_i\varphi=\varphi\otimes\psi_i
\end{equation}
and $K^*_i:\mathcal{H}_1\otimes\mathcal{H}_2\to\mathcal{H}_1$ by
\begin{equation}
\label{eq:op_Kdag}
K^*_i(\varphi\otimes\psi)=\langle\psi_i,\psi\rangle\varphi~\forall \varphi\in\mathcal{H}_1,\psi\in\mathcal{H}_2.
\end{equation}
Note that $K_iK^*_i$ is the projection from $\mathcal{H}_1\otimes\mathcal{H}_2$ to $\{\varphi\otimes\psi_i:\varphi\in\mathcal{H}_1\}$ and $\sum_{i\in I}K_iK^*_i=I$. Moreover for any $a\in B(\mathcal{H}_1\otimes\mathcal{H}_2)$, we can define a matrix $a=(a_{ij})$ in such a way that
\begin{equation}
\label{eq:matrix}
    a_{ij}=K^*_iaK_j.
\end{equation}
This set $\{K_i\}_{i\in I}$ is essentially a set of Kraus operators~\eqref{eq:Kraus}. 

\vspace{3mm}
\paragraph{\textbf{Digital and Analog Computation}}
When the time evolution of a quantum state is continuous with respect to time, it is called an analog computation; when it is discontinuous, it is called a digital computation. Let $U=\{a: a a^*=1\}\subset\mathcal{A}$ be the set of all unitary operators of von Neumann algebra $\mathcal{A}$. The time evolution of an initial state $\rho_0$ at $t\in \mathbb{R}_{\ge0}$ is defined by 
\begin{equation}
\label{eq:timeevolv}
    \rho_t=a_t\rho_0 a^*_t,~~a_t\in U~a_0=1. 
\end{equation}
Let $I(\ni0)$ be a bounded subset of $\mathbb{R}_{\ge0}$ including 0. We obtain a family $\{\rho_t\}_{t\in I}$ of states. When $I$ is a discrete set, computation is digital. When $I$ is a connected set, computation is analog. Analog quantum computation can be performed by using time-dependent Hamiltonians. Let $H(t)\in\mathcal{A}$ be a Hamiltonian ($H(t)=H^*(t)~\forall t\in I$). When $a_t=\exp(-i\int_0^tH(t)dt)$ is well-defined, then computation with time evolution \eqref{eq:timeevolv} corresponds to adiabatic quantum computation or quantum annealing. 

With respect to digital quantum computation, it is important to know whether it is possible to generate any operator one wishes to perform by means of a countable number of operators in $\mathcal{A}$. The following theorem tells us that digital quantum computation can be universal (see Def.~\ref{def:quantum_universal} for the definition of universal gates). 
\begin{theorem}
Let $\mathcal{H}$ be a separable Hilbert space. Any density operator $\rho\in B(\mathcal{H})$ can be simulated by a countable unitary gate sets $\{a_{t_n}\in\mathcal{A}:a_{t_n}a^*_{t_n}=1\}_{n\in\mathbb{N}}$ of a von Neumann algebra $\mathcal{A}$: 
\begin{equation}
    \exists\rho_0\in D(\mathcal{H}),\forall \rho\in D(\mathcal{H}),\lim_{n\to\infty}\|\rho-a_{t_n}\rho_0a^*_{t_n}\|=0.
\end{equation}
\end{theorem}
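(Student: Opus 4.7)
The plan is to combine three ingredients: separability of the ``unitary'' set $\{a\in\mathcal{A}:aa^*=1\}$ in the strong operator topology (SOT), the spectral theorem for trace-class positive operators, and trace-norm continuity of conjugation against a compact reference state.

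First I would fix an orthonormal basis $\{e_n\}_{n\in\mathbb{N}}$ of the separable Hilbert space $\mathcal{H}$ and take
\begin{equation*}
\rho_0 = \sum_{n=1}^{\infty} \mu_n |e_n\rangle\langle e_n|,
\end{equation*}
where $\{\mu_n\}$ is a probability sequence designed so that, given any probability vector $\{\lambda_k\}$ with $\sum_k \lambda_k = 1$, some subsequence and rearrangement of $\{\mu_n\}$ lies within arbitrary $\ell^1$-distance of $\{\lambda_k\}$. A natural candidate interleaves an enumeration of rationals in $(0,1]$ with a fast-decaying normalizer enforcing $\sum_n \mu_n = 1$. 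For a target $\rho$ with spectral decomposition $\rho = \sum_k \lambda_k |\psi_k\rangle\langle\psi_k|$ and $\varepsilon > 0$, this construction yields an auxiliary density operator $\tilde\rho = U\rho_0 U^*$, for some change-of-basis element $U\in\mathcal{A}$ with $UU^*=1$, satisfying $\|\rho - \tilde\rho\| < \varepsilon/2$ in trace norm.

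Next, separability of $\mathcal{A}\subset B(\mathcal{H})$ on the separable $\mathcal{H}$ yields a countable SOT-dense subset of $\{a\in\mathcal{A}:aa^*=1\}$, from which I extract $\{a_{t_n}\}$ with $a_{t_n}\to U$ in SOT. Since $\rho_0$ is trace-class (in particular compact), SOT convergence of $a_{t_n}$ promotes to trace-norm convergence $a_{t_n}\rho_0 a_{t_n}^* \to U\rho_0 U^* = \tilde\rho$; combining this with the $\varepsilon/2$ bound from the first step via a diagonal extraction yields the desired sequence satisfying $\|\rho - a_{t_n}\rho_0 a_{t_n}^*\| \to 0$.

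The main obstacle will be the spectral-matching step. Conjugation by a two-sided unitary preserves the eigenvalue multiset, so the orbit of a single $\rho_0$ under genuine unitaries cannot exhaust $D(\mathcal{H})$. The hypothesis $a_{t_n}a_{t_n}^* = 1$ is strictly weaker than two-sided unitarity in infinite dimensions---it only forces $a_{t_n}$ to be a co-isometry, with $a_{t_n}^*a_{t_n}$ possibly a proper projection---and this extra flexibility is precisely what one must exploit to redistribute spectral mass in the SOT limit. Constructing $\{\mu_n\}$ with genuinely universal $\ell^1$-approximation properties for probability vectors on $\mathbb{N}$, and verifying that the corresponding co-isometric approximants in $\mathcal{A}$ converge in SOT to the required limit, is where I anticipate the bulk of the technical work.
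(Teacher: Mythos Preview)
Your plan diverges sharply from the paper's argument, which is two sentences long: it cites (a consequence of) the Kaplansky density theorem to obtain countable generation of $\mathcal{A}$ on a separable $\mathcal{H}$, and then simply asserts that for any target $\rho$ a unitary $a_{t_n}$ with $a_{t_n}^*\rho\,a_{t_n}=\rho_0$ exists and is reached in discrete time steps. There is no spectral-matching construction, no choice of a special $\rho_0$, and no SOT or trace-norm continuity argument in the paper.

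You have correctly isolated an obstruction the paper does not engage with: genuine unitary conjugation preserves the eigenvalue multiset, so a single $\rho_0$ cannot be carried onto all of $D(\mathcal{H})$ that way. However, your proposed fix via the weaker hypothesis $aa^*=1$ does not close the gap. If $a^*a=p$ is a proper projection then $\Tr(a\rho_0a^*)=\Tr(\rho_0 p)<1$ whenever $p$ misses part of the support of $\rho_0$, so the conjugate already leaves $D(\mathcal{H})$; and regardless of $p$ one has $\|a\rho_0a^*\|\le\|a\|\,\|\rho_0\|\,\|a^*\|=\|\rho_0\|$, so if $\|\rho_0\|<1$ no sequence of such conjugates can approach a pure target in norm, while if $\|\rho_0\|=1$ then $\rho_0$ is itself pure and every $a\rho_0a^*$ has rank at most one, blocking approximation of any mixed target. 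The universal-$\ell^1$ eigenvalue sequence $\{\mu_n\}$ you hope to build therefore cannot exist: a proper subsequence carries total mass strictly below $1$, and a full rearrangement returns you to the spectrum-preservation obstacle. The step you flagged as ``the bulk of the technical work'' is exactly where the argument breaks.
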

\begin{proof}
First of all, as a consequence of the Kaplansky density theorem, it follows that any von Neumann algebra on a separable Hilbert space is generated by a countable set. Therefore with respect to an initial state $\rho_0\in D(\mathcal{H})$ and any target $\rho\in D(\mathcal{H})$, any unitary operator $a_{t_n}$ such that $a^*_{t_n}\rho a_{t_n}=\rho_0$ is generated by discrete time steps $\epsilon_n=t_{n}-t_{n-1}$.
\end{proof}


\subsection{Generalized Things}
\paragraph{\textbf{Base Space: Why are graphs not enough?}}
The use of an arbitrary topological space $X$ for a base space is one of the main extensions from the traditional quantum computation theory defined on graphs. Computations defined only on a graph have various limitations regarding computing power. The class of problems that conventional quantum computers can solve efficiently is called \textbf{BQP}, but it is known that problems defined with an uncountable set generally belong to a higher class than \textbf{BQP}. For example, it is proven that $\textbf{BQP}\subset\textbf{PSPACE}\subsetneq\textbf{NEXPSPACE}$, where
$\textbf{PSPACE}$ is the set of all decision problems problems solvable by a Turing machine using a polynomial amount of space and $\textbf{NEXPSPACE}$ is the set of all decision problems solvable by a non-deterministic Turing machine using an exponential amount of space.  

Therefore even if we use a universal quantum computer, it is difficult to efficiently simulate a generic problem of quantum physics on a space having cardinality of the continuum $2^{\aleph_0}$ (or greater cardinality). The theory of computational complexity $(\textbf{BQP}\subsetneq\textbf{NEXPSPACE})$ suggests that to efficiently simulate general quantum theories with cardinality of the continuum $2^{\aleph_0}$, the memory of a Turing machine should be extended to an exponential amount of space.

In our theory, we are able to address a countably infinite set of quantum gates by considering a infinite tensor product of von Neumann algebras. This would correspond to a case where a set of qubits is dense in an open set of $X$, on which a Turing machine is defined. Moreover when $\{O_\lambda\}_{\lambda\in\Lambda}$ is a family of connected open sets of $X$ such that $X=\bigcup_{\lambda\in \Lambda}O_\lambda$, we can execute quantum computation with cardinality of the continuum $2^{\aleph_0}$ by giving a von Neumann algebra $\mathcal{A}(O_\lambda)$ for each $O_\lambda$.

\vspace{3mm}
\paragraph{\textbf{Quantum Gates and Circuits: From qubits to operator algebras}}
The another important generalization in this work is the use of von Neumann algebras for computation. In the traditional theory, the  Hilbert space is $\mathbb{C}^{2^n}$ with $n$-qubits and operators are elements of $B\left(\mathbb{C}^{2^n}\right)=M_{2^n}(\mathbb{C})$. In this work, we extend the finite dimensional complex Hilbert space $\mathbb{C}^n$ to any separable complex Hilbert space $\mathcal{H}$ and operators are elements of a von Neumann algebra. To see that this is a natural generalization, let us first recall that $M_{2^n}(\mathbb{C})$ is a von Neumann algebra. Each fiber of a quantum fibration accommodates quantum circuits. A Hilbert space $\mathcal{H}$ is called separable if it has a countable orthonormal basis, which is equivalent to $\dim\mathcal{H}\le\aleph_0$.

Another elementary example of a von Neumann algebra is given by multiplication operators. Let $(\Omega,\mu)$ be a measure space and consider the $L^2$-space $L^2(\Omega,\mu)=\{f:\Omega\to\mathbb{C}:\int_\Omega|f|^2d\mu<\infty\}$. This is an infinite dimensional Hilbert space with the inner product $\langle f,g\rangle=\int_\Omega f(\omega)\overline{g(\omega)}d\mu(\omega)$. Let $L^\infty(\Omega,\mu)=\{f:\Omega\to\mathbb{C}:\exists\alpha<\infty,|f(\omega)|<\alpha~a.e.\}$ be the set of all measurable functions that are bounded almost everywhere. A multiplication operator $M_\varphi$ on $L^2(\Omega,\mu)$ is defined by 
\begin{align}
\begin{aligned}
    &L^\infty(\Omega)\ni\varphi\to M_\varphi\in B(L^2(\Omega,\mu)) \\
    &M_\varphi(f)=\varphi f
\end{aligned}
\end{align}
The set of all multiplication operators is a von Neumann algebra. Clearly they are fundamentally important in quantum mechanics, however it is an extremely non-trivial task to construct quantum gates to approximate them using only Pauli operators.

Next, let us explain the motivation for using von Neumann algebra from the viewpoint of algebraic quantum field theory. The physical observables are represented by self-conjugate operators. They may be non-bounded, but it is sufficient to consider only bounded operators by considering their spectral projections. In fact, all experimental data belong to some bounded set. We can consider observables in each bounded spacetime domain and the von Neumann algebras generated by their spectral projections. This yields a family of von Neumann algebras parametrized in the spacetime domain, which is called a net of von Neumann algebras. Moreover a von Neumann algebra is easily obtained by any subalgebra $\mathcal{B}$ of $B(\mathcal{H})$. Let $\mathcal{B}'$ be the commutant of $\mathcal{B}$ \eqref{eq:vonNeumann} and $\mathcal{B}''=(\mathcal{B}')'$. One can show that $\mathcal{B}\subset\mathcal{B}''$ and $\mathcal{B}'''=\mathcal{B}'$. Therefore $\mathcal{B}'$ is a von Neumann algebra based on the double commutant theorem.

In our theory, the interactions of quantum states among fibers can be understood as a quantum communication/interaction on the net of von Neumann algebras, which is a generalized quantum network on an arbitrary topological space. Therefore, from this perspective, our theory of computation with von Neumann algebras on am arbitrary topological space is a natural extension of conventional quantum computation in a form applicable to the algebraic quantum field theory.

\section{\label{sec:AQFT}Application to Algebraic Quantum Field Theory}
Now let us explain how we implement algebraic quantum field theory (AQFT) in our theory. To begin with, we give the general definition of AQFT, which consists of the following date:
\begin{itemize}
    \item{\textbf{Base space}:} The base space $X$ where AQFT is defined is a Minkowski space.
    \item{\textbf{Local algebra}:} On each bounded open set $O$ of $X$, a von Neumann algebra $\mathcal{A}(O)$ satisfying the following conditions is defined:
    \begin{enumerate}
        \item{Isotony:} $\mathcal{A}(O_1)\subset\mathcal{A}(O_2)$ for all bounded open sets $O_1,O_2$ such that $O_1\subset O_2$.
        \item{Causality:} For all bounded open sets $O_1,O_2$, when they are causally disjoint, then
        \begin{equation}
            A_1A_2-A_2A_1=0,
        \end{equation}
        for all $A_i\in\mathcal{A}(O_i)~(i=1,2)$.
        \item{Covariance:} Let $x\in X$, $\alpha\in SL(2,\mathbb{C})$, $\Lambda(\alpha)$ be a Lorentz matrix, and $U(x,\alpha)$ be a continuous unitary representation of the covering group of the Poincar\'{e} group. They obey the following equation
        \begin{equation}
            U(x,\alpha)\mathcal{A}(O)U(x,\alpha)^*=\mathcal{A}(\Lambda(\alpha)O+x).
        \end{equation}
    \end{enumerate}
\end{itemize}

The construction of a local algebra on a bounded open set $O$ can be done by collecting local algebras on $x\in O$. In particular, since condition (3) imposes a continuous deformation of the local $\mathcal{A}$, we may assume a sheaf of von Neumann algebras. For this we use the same technique by which we construct a sheaf by a presehaf. Let $r_{UV}:\mathcal{A}(U)\to\mathcal{A}(V)$ be the restriction map that realizes the isotony condition of local algebras which are defined on bounded open sets $V\subset U\subset X$. Furthermore imposing the following condition on the restriction map is consistent with the isotony condition:
\begin{equation}
   r_{VW}\circ r_{UV} =r_{UW}
\end{equation}
for bounded open sets $W\subset V\subset U$. In other words, AQFT can be regarded as a (pre)sheaf whose sections are von Neumann algebras. Then by taking the inductive limit
\begin{equation}
    \mathcal{A}_x=\lim_{U\to x}\mathcal{A}(U),
\end{equation}
we have a subset $\mathcal{A}_x$ of bounded operators. We may assume that this is a von Neumann algebra. Then for a bounded open set $O$ we can realize a local algebra that obeys the first constraint in such a way that
\begin{equation}
    \mathcal{A}(O)=\bigcup_{x\in O}\mathcal{A}_x.
\end{equation} 
The assignment $O\mapsto\mathcal{A}(O)$ is called a net of local algebras. By extending $O$ to the entire space $X$, one can construct a global algebra $\mathcal{A}(X)=\bigcup\mathcal{A}(O)$. Equivalently, one can construct a local algebra by restricting $\mathcal{A}(X)$ to each bounded open subset $O$ as $\mathcal{A}(O)=r_{XO}(\mathcal{A}(X))$. Note $\mathcal{A}(X)$ may not be a von Neumann algebra in general. 

Realization of the second and the third constrains of a local algebra can be done by implementing the Hamiltonian dynamics. Let $O\subset X$ be a bounded open set and $H(O)\in\text{End}(\mathcal{H})$ be a hermitian operator $(H(O)=H^*(O))$ defined on a Hilbert space $\mathcal{H}$ where $\mathcal{A}(O)$ is defined. When $H(O)$ is time-dependent $H=(O)=H^t(O)$, its time evolution can be defined by $\mathcal{U}_t=T_t\exp(-i\int^t_0H(t,O)dt)\in\text{End}(\mathcal{H})$, where $T_t\text{exp}$ is the time-ordered exponential. It can be approximated by a family $\{U_t\}\subset \mathcal{A}(O)$ of operators such that 
\begin{equation}
    \left\|T_t\exp(-i\int^t_0H^t(O)dt)-U_t\right\|<\epsilon,
\end{equation}
where $\epsilon>0$ is a precision of simulation. As long as the Hamiltonian $H^t(O)$ is designed to satisfy the laws of physics (particularly the conditions of relativity), locality conditions (2) and (3) should be automatically satisfied in the 0-limit of $\epsilon$. The quantum channel sending information of $\mathcal{A}(O_1)$ to different $\mathcal{A}(O_2)$ can be interpreted as propagation of particles. This can be discussed by considering time evolution of a state which is defined on an open set $O$ which contains $O_1\cup O_2\subset O$. Let us consider a time-evolution of an initial state $\rho_{0}$ defined on a bounded open set $O$. The state at time $t$ can be written as 
\begin{equation}
\rho_t=\mathcal{U}_t\rho_{0}\mathcal{U}^*_t.
\end{equation}
Then the measurement of a hermitian operator (physical observable) on $O$ can be done by means of a POVM operator. By restricting $\rho_t$ to $O_1$ or $O_2$, we obtain its local information.  

\section{Applications to Semiclassical Phenomena}
\subsection{\label{sec:semiclassical_operator}Semiclassical Operations}
To apply our theory to semi-classical and classical phenomena, we consider a semi-classical class of operators.

Let $f:\widetilde{D}(\mathcal{H})\to\mathbb{R}_{\ge0}$ be a non-negative map. Let 
\begin{equation}
    D_0(f)=\{\rho\in\widetilde{D}(\mathcal{H}):f(\rho)=0\}
\end{equation}
be the set of all elements of $\widetilde{D}(\mathcal{H})$ sent to 0 by $f$. For a given von Neumann algebra $\mathcal{A}$, we define its subset as 
\begin{equation}
    \mathcal{A}^f=\left\{a\in \mathcal{A}: \forall\rho\in \widetilde{D}(\mathcal{H}),~i_a(\rho)\in\widetilde{D}(\mathcal{H}),~f\left(\frac{i_a(\rho)}{\Tr(i_a(\rho))}\right)\le f\left(\frac{\rho}{\Tr\rho}\right)\right\}.
\end{equation}

We put 
\begin{equation}
\mathcal{C}\left(\mathcal{D}_0(f),\mathcal{A}^f,\mathcal{H}\right)=\bigcup_{\rho\in{D_0(f)}}O_{\mathcal{A}^f}(\rho)
\end{equation}
The following statement plays a fundamental role for discussing classical states and classical operators (Definition~\ref{def:semiclassical}).
\begin{theorem}
\label{thm:classical}
Let $\mathcal{A}$ be a von Neumann algebra. Then $\mathcal{C}\left(D_0(f),\mathcal{A}^f,\mathcal{H}\right)=D_0(f)$ is true for any non-negative $f:\widetilde{D}(\mathcal{H})\to\mathbb{R}_{\ge0}$.
\end{theorem}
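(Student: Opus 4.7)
The statement is a set equality, so the plan is to verify the two inclusions separately, each of which should follow directly from the definitions once the role of normalization is clarified.

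For the inclusion $\mathcal{C}(D_0(f),\mathcal{A}^f,\mathcal{H}) \subseteq D_0(f)$, I would take an arbitrary $\sigma \in \mathcal{C}(D_0(f),\mathcal{A}^f,\mathcal{H})$ and unpack the union: there exist $\rho \in D_0(f)$ and $a \in \mathcal{A}^f$ such that
\begin{equation*}
\sigma = \frac{i_a(\rho)}{\Tr(i_a(\rho))}.
\end{equation*}
The defining inequality of $\mathcal{A}^f$ then gives $f(\sigma) \le f(\rho/\Tr\rho)$. Since $\rho \in D_0(f)$, i.e.\ $f(\rho/\Tr\rho) = 0$ (see the obstacle below), we get $f(\sigma) = 0$ by non-negativity of $f$, hence $\sigma \in D_0(f)$.

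For the reverse inclusion $D_0(f) \subseteq \mathcal{C}(D_0(f),\mathcal{A}^f,\mathcal{H})$, the key observation is that the identity $e \in \mathcal{A}$ always belongs to $\mathcal{A}^f$: the quantum operation $i_e$ is the identity on $\widetilde{D}(\mathcal{H})$, so both defining conditions of $\mathcal{A}^f$ hold trivially, the second with equality. Consequently, for every $\rho \in D_0(f)$, the normalized state $\rho/\Tr\rho$ lies in $O_{\mathcal{A}^f}(\rho)$, and therefore in the union $\mathcal{C}(D_0(f),\mathcal{A}^f,\mathcal{H})$. Thus all of $D_0(f)$ is recovered, up to normalization.

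The main obstacle is not technical but conceptual, namely the mild mismatch in normalization conventions: $D_0(f)$ is defined as a subset of the unnormalized cone $\widetilde{D}(\mathcal{H})$ via the condition $f(\rho) = 0$, whereas each orbit $O_{\mathcal{A}^f}(\rho)$ consists of normalized states, and the inequality defining $\mathcal{A}^f$ compares values of $f$ on the \emph{normalized} operators $i_a(\rho)/\Tr(i_a(\rho))$ and $\rho/\Tr\rho$. I would handle this by adopting the natural convention that $f$ is invariant under positive rescaling, so that $f(\rho) = 0$ iff $f(\rho/\Tr\rho) = 0$; under this reading the two inclusions above are immediate. Without this convention one must restrict to $D_0(f) \cap D(\mathcal{H})$, in which case the same argument applies verbatim.
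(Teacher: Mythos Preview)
Your proof is correct and follows essentially the same route as the paper: the paper declares the inclusion $D_0(f)\subset\mathcal{C}(D_0(f),\mathcal{A}^f,\mathcal{H})$ obvious (your identity-operator argument makes this explicit), and for the reverse inclusion it runs your exact chain $0\le f(\sigma)\le f(\rho_0/\Tr\rho_0)=0$, phrased as a proof by contradiction. The normalization mismatch you flag---that $\rho\in D_0(f)$ gives $f(\rho)=0$ rather than $f(\rho/\Tr\rho)=0$---is present in the paper as well and is silently resolved there by the same implicit scale-invariance convention you propose; your discussion of this point is in fact more careful than the original.
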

\begin{proof}
It is obvious that $\mathcal{C}\left(D_0(f),\mathcal{A}^f,\mathcal{H}\right)\supset D_0(f)$. So we show $\mathcal{C}\left(D_0(f),\mathcal{A}^f,\mathcal{H}\right)\subset{D}_0(f)$. Suppose there is an element $\rho$ of $\mathcal{C}\left(D_0(f),\mathcal{A}^f,\mathcal{H}\right)$ that is not an element of $D_0(f)$. By definition, such a $\rho$ can be written as $\rho=\frac{i_a(\rho_0)}{\Tr(i_a(\rho_0))}$ with an element $a$ of $\mathcal{A}^f$ and a state $\rho_0$ in $D_0(f)$. They obey $0\le f(\rho)=f\left(\frac{i_a(\rho_0)}{\Tr(i_a(\rho_0))}\right)\le f\left(\frac{\rho_0}{\Tr\rho_0}\right)=0$, which means $\rho$ is also an element of $D_0(f)$. This contradicts the assumption that $\rho$ is not an element of $D_0(f)$. Therefore any element of $\mathcal{C}\left(D_0(f),\mathcal{A}^f,\mathcal{H}\right)$ is an element of $D_0(f)$.  
\end{proof}

The following statement can be shown in the same manner as Proposition~\ref{eq:selfadjoint}.
\begin{proposition}
For any non-negative $f$ and for any von Neumann algebra $\mathcal{A}$, the following set
\begin{equation}
    \left\{a\in \mathcal{A}: \forall\rho\in \widetilde{D}(\mathcal{H}),~i_a(\rho)\in\widetilde{D}(\mathcal{H}),~f\left(\frac{i_a(\rho)}{\Tr(i_a(\rho))}\right)= f\left(\frac{\rho}{\Tr\rho}\right)\right\}\cap\mathcal{R}
\end{equation}
is a group.
\end{proposition}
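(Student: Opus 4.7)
The plan is to imitate the proof of Proposition~\ref{eq:selfadjoint} almost verbatim, verifying the three group axioms for the set in question (call it $\mathcal{G}_f$) inside $\mathcal{A}$. The essential new ingredient compared with the earlier argument is that the defining condition is an \emph{equality} $f(i_a(\rho)/\Tr i_a(\rho)) = f(\rho/\Tr\rho)$ rather than a one-sided inequality, and this symmetry is exactly what one needs to get closure under inverses.

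The identity $e\in\mathcal{R}$ clearly lies in $\mathcal{G}_f$ since $i_e=\mathrm{Id}_{\widetilde{D}}$ and so equality of $f$-values is trivial. For closure under multiplication, I would take $a,b\in\mathcal{G}_f$ (so $ab\in\mathcal{R}$ with inverse $b^{-1}a^{-1}$), invoke the composition law $i_{ab}=i_a\circ i_b$ already recalled in Proposition~\ref{eq:selfadjoint}, and apply the $f$-invariance of $b$ followed by that of $a$ to telescope, obtaining $f(i_{ab}(\rho)/\Tr i_{ab}(\rho)) = f(i_b(\rho)/\Tr i_b(\rho)) = f(\rho/\Tr\rho)$. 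The ambient condition $i_{ab}(\rho)\in\widetilde{D}(\mathcal{H})$ follows from the same conditions for $a$ and $b$ separately, which are built into the membership criterion of $\mathcal{G}_f$.

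The step I would flag as the genuine obstacle, and the reason the proposition is stated only for this conservation version of $\mathcal{A}^f$, is closure under inverses. Given $a\in\mathcal{G}_f$, I would set $\sigma=i_{a^{-1}}(\rho)$; the argument in Proposition~\ref{eq:selfadjoint} already gives $\sigma\in\widetilde{D}(\mathcal{H})$, and a direct computation using $(a^{-1})^*a^*=(aa^{-1})^*=e$ shows $i_a(\sigma)=\rho$. Then the $f$-invariance of $a$ applied to $\sigma$ delivers $f(\rho/\Tr\rho)=f(i_a(\sigma)/\Tr i_a(\sigma))=f(\sigma/\Tr\sigma)=f(i_{a^{-1}}(\rho)/\Tr i_{a^{-1}}(\rho))$, which is the required condition for $a^{-1}\in\mathcal{G}_f$. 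Notice that if one had only a one-sided inequality in the definition, as with the $\mathcal{A}^f$ of Theorem~\ref{thm:classical}, this last step would give the opposite direction and produce only a monoid. The group structure is therefore a genuine feature of the conservation class, rather than of the mere contraction class, of semi-classical operators.
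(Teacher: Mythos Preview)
Your proposal is correct and follows exactly the approach the paper intends: the paper gives no separate proof but simply states that the result ``can be shown in the same manner as Proposition~\ref{eq:selfadjoint}.'' Your write-up in fact supplies more detail than the paper, explicitly spelling out the identity, product, and inverse steps and correctly isolating the point where the equality (rather than inequality) hypothesis is needed.
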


\begin{proposition}
For any non-negative $f$ and for any von Neumann algebra $\mathcal{A}$, the following set
\begin{equation}
    \left\{a\in \mathcal{A}: \forall\rho\in \widetilde{D}(\mathcal{H}),~i_a(\rho)\in\widetilde{D}(\mathcal{H}),~f\left(\frac{i_a(\rho)}{\Tr(i_a(\rho))}\right)< f\left(\frac{\rho}{\Tr\rho}\right)\right\}\cap\mathcal{R}
\end{equation}
is not a group.
\end{proposition}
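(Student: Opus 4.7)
The plan is to show that the set in question, call it $G_f$, fails to contain the multiplicative identity $e$ of $\mathcal{A}$, and therefore cannot be a group under the multiplication inherited from $\mathcal{A}$ (which is the same group operation used in Proposition~\ref{eq:selfadjoint}, where $\check{\mathcal{A}}\cap\mathcal{R}$ was shown to be a group).

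First I would observe that $e\in\mathcal{R}$ trivially, with $e^{-1}=e$. For $e$ to lie in $G_f$, one would need the \emph{strict} inequality
\begin{equation}
    f\!\left(\frac{i_e(\rho)}{\Tr(i_e(\rho))}\right) < f\!\left(\frac{\rho}{\Tr\rho}\right)
\end{equation}
to hold for every $\rho\in\widetilde{D}(\mathcal{H})$. But $i_e(\rho)=e\rho e^{*}=\rho$, so the left-hand side equals the right-hand side, and the strict inequality fails for every $\rho$. Hence $e\notin G_f$.

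Next I would rule out the possibility that $G_f$ forms a group under some other identity. Since $G_f\subset\mathcal{R}\subset\mathcal{A}$ and all elements are invertible in $\mathcal{A}$, any element $u\in G_f$ acting as an identity $u\cdot g=g$ would force $u=e$ in $\mathcal{A}$; so no substitute identity can exist inside $G_f$. Consequently $G_f$ is not a group. The same argument covers the degenerate case where $G_f$ is empty (e.g., when $f\equiv 0$ on normalized states), since the empty set is not a group either.

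As a sanity check that reinforces the result, I would also note (without needing it for the main conclusion) the inverse-closure obstruction: if $a\in G_f$ and $a^{-1}\in G_f$, applying the defining strict inequality to $\rho$ and then to $\rho'=i_a(\rho)/\Tr(i_a(\rho))$ yields $f(\rho/\Tr\rho)<f(\rho/\Tr\rho)$, using that $i_{a^{-1}}\circ i_a$ returns $\rho$ up to normalization. The only step that requires any care is making sure the ``identity-absence'' argument is unambiguous, but since the group operation is fixed by inheritance from $\mathcal{A}$, this is immediate; there is no real obstacle to overcome in this proof.
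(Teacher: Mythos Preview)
Your proof is correct. Your primary argument is different from the paper's: you show that the identity $e$ fails the strict inequality (since $i_e(\rho)=\rho$), so $G_f$ has no identity element and is therefore not a group; the paper instead argues that for any $a\in G_f$ the inverse $a^{-1}$ cannot lie in $G_f$, because composing the two strict inequalities would force $f(\rho/\Tr\rho)<f(\rho/\Tr\rho)$. Your identity argument is the more elementary and self-contained one---it works uniformly (including the empty case, which you note) and avoids having to track normalizations through $i_{a^{-1}}\circ i_a$. The paper's inverse argument, which you include only as a sanity check, has the minor advantage of explaining \emph{why} the strict-inequality set behaves so differently from the equality set in the preceding proposition: strict monotonicity is irreversible. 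Either route settles the proposition.
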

\begin{proof}
The proof is completed by checking that the inverse $a^{-1}$ of $a$ is not contained in the set due to the relation 
    \begin{equation}
        f\left(\frac{i_a(\rho)}{\Tr(i_a(\rho))}\right)<f\left(\frac{\rho}{\Tr\rho}\right). 
    \end{equation}
\end{proof}

Let us explain below why a tuple $(f,\mathcal{A}^f,D_0(f))$ corresponds to a set of operators in the von Neumann algebra $\mathcal{A}$.
Whether a state is truly quantum or classical is determined by the presence or absence of quantum correlations. There are two known quantum correlations: entanglement and quantum discord.

In the classification of quantum states by quantum entanglement, all separable states are regarded as classical states, and the other states are regarded as true quantum states. The presence or absence of entanglement can be determined using an entanglement measure. Let us explain this more precisely. Let $\mathcal{D}_\text{sep}\subset \widetilde{D}(\mathcal{H})$ be the set of all separable states. Here we say $\rho\in\widetilde{D}(\mathcal{H})$ is separable if $\rho/\Tr\rho$ is a separable as an element of $D(\mathcal{H})$. Let $E:\text{End}(\mathcal{H})\to\text{End}(\mathcal{H})$ be an entanglement measure $E$, which obeys the following properties~\cite{PhysRevLett.78.2275}:
\begin{itemize}
    \item $E(\rho)\ge0$ for any density operator $\rho$. 
    \item $E(\rho)=0$ if and only if $\rho$ is separable.
    \item $E(\rho)$ is unchanged under any local unitary operation. 
    \item $E(\rho)$ does not increase by a local general measurement and classical communication. 
\end{itemize}
To avoid the appearance of entanglement in classical theory, we use the fourth property of an entanglement measure in the definition of semi-classical algebras.

\begin{example}
\label{ex:EEmeasure}
One of the most standard entanglement measures is defined using the quantum relative entropy $S(\rho\|\sigma)=\Tr(\rho\log\rho-\rho\log\sigma)$ $\rho,\sigma\in\mathcal{D}(\mathcal{H})$ \cite{umegaki1962conditional} as follows
\begin{equation}
    E(\rho)=\inf_{\sigma'\in\mathcal{D}_\text{sep}}S\left(\rho~\Bigg\|\frac{\sigma'}{\Tr\sigma'}\right),~\rho\in\mathcal{D}(\mathcal{H}).
\end{equation}
Besides, negativity and logarithmic negativity are also often used practically as entanglement measures.
\end{example}

Another measure of quantum correlation is quantum discord, which studies non-classical correlation between two subsystems. While non-separable states are called entangled states, even separate states may have non-zero value of discord. Let $X$ be a topological space. For a given state $\rho$ on $X$, suppose quantum states are well-defined on a subset $A\subset X$ and its complement $A^c=X\setminus A$ so that we can take the partial trace over each subsystem $\rho_A=\Tr_{\mathcal{H}_{A^c}}\rho$, $\rho_{A^c}=\Tr_{\mathcal{H}_{A}}\rho$. We introduce mutual information $I(\rho)=S(\rho_A)+S(\rho_{A^c})-S(\rho)$ and $J_A(\rho)=S(\rho_A)-S_{A|A^c}$, where $S_{A|A^c}$ is the conditional entropy. Then quantum discord is defined as 
\begin{equation}
    \mathcal{D}_A(\rho)=I(\rho)-J_A(\rho),
\end{equation}
which is non-negative~\cite{bera2017quantum}. When using quantum discord to discucss quantum correlations, an element $\rho$ of the set for which $\mathcal{D}_A(\rho)$ is 0 is called a classical state.

\begin{definition}
\label{def:semiclassical}
Let $f$ be a measure of quantum correlation. For a given von Neumann algebra $\mathcal{A}$, we call an element of $\mathcal{A}^f$ a semi-classical operator and call an element of $D_0(f)$ a classical state. 
\end{definition}
To classify the computational power of semi-classical algebras, we introduce the following definition.

\begin{definition}
\label{def:classical_universal}
Let $f$ be a measure of quantum correlation. A semi-classical algebra $\mathcal{A}^f$ is called \textbf{universal} (in the classical sense) if it acts transitively on $D_0(f)$, namely 
\begin{equation}
    O_{\mathcal{A}^f}(\rho)=D_0(f)
\end{equation}
holds for any element $\rho$ of $D_0(f)$.
\end{definition}

When $f:\widetilde{\mathcal{D}}(\mathcal{H})\to\mathbb{R}_{\ge0}$ is a measure of quantum correlation, Theorem~\ref{thm:classical} guarantees that actions of any semi-classical algebra to the set of all classical states are closed. Hence it warrants Definition~\ref{def:semiclassical} and Definition~\ref{def:classical_universal}.

\begin{remark}
Note that the strength of quantum correlations is not directly related to the power of quantum computation. In fact, a Clifford gate set can increase entanglement, but it is not a universal gate set. 
\end{remark}

\subsection{\label{sec:homotopy}Homotopy}
Let $\{\rho_t\}_{t\ge0}$ be a sequence of quantum states with a fixed initial state $\rho_0$. Namely there exists a sequence $\{a_t\}_{t\ge0}$ of unitary operators such that $\rho_t=a_t\rho_0 a^*_t$. Let $\alpha:\{\rho_t\}_{t\ge0}\to\mathbb{R}$ be a monotonically non-increasing with respect to $t$: $\alpha(\rho_t)\ge \alpha(\rho_{t'})$ for all $t<t'$. Functions $\alpha$ with this property are widely used in physics. Specifically, the measure of quantum correlations, temperature, energy, entropy could also be used as this function. In other words, in a situation where there is flow in a certain direction, some physical quantity serves as such an $\alpha$.

More generally, let $\alpha:X\to\mathbb{R}$ be a map, $f:[0,1]\to X$ be a continuous map and $\widetilde{f}_\alpha(t):[0,1]\to X$ be a map defined as

\begin{equation}
\widetilde{f}_\alpha=f~\text{if}~\alpha(f(t))~\text{is a non-increasing function of}~t,\text{otherwise}~\widetilde{f}_\alpha([0,1])=\emptyset.
\end{equation}

\begin{example}
    If we choose as $f_t$ a quantum state $\rho_t$ at time $t$, $X$ a certain set of quantum states, and $\alpha$ the entanglement measure, then the non-increasing property of $\alpha$ correspond to the example discussed in Sec.~\ref{sec:semiclassical_operator}.
\end{example}

For a given map $\alpha:X\to\mathbb{R}$ and given two paths $f,g$, which satisfy $f(1)=g(0)$, we define a product $\widetilde{f}_\alpha*\widetilde{g}_\alpha:[0,1]\to X$ as 
\begin{equation}
\widetilde{f}_\alpha*\widetilde{g}_\alpha=f*g~\text{if}~\alpha(f*g(t))~\text{is a non-increasing function of}~t,\text{otherwise}~\widetilde{f}_\alpha*\widetilde{g}_\alpha([0,1])=\emptyset.
\end{equation}

Let $f_0,f_1:[0,1]\to X$ be two continuous paths such that $f_0(0)=f_1(0)$ and $f_0(1)=f_1(1)$. Let $\alpha:X\to\mathbb{R}$ be a map. Suppose $f_0,f_1$ are homotopic by a homotopy $F:[0,1]\times [0,1]\to X$. We define $\widetilde{F}_a(s,t)$ as
\begin{equation}
\widetilde{F}_a=F~\text{if for each}~s,\alpha(F(s,t))~\text{is a non-increasing function of}~t,\text{otherwise}~\widetilde{F}_\alpha([0,1]\times[0,1])=\emptyset.
\end{equation}

For a given $\alpha:[0,1]\to\mathbb{R}$ and $f_0,f_1:X\to Y$ continuous maps if there exists a homotopy $F$ between $f_0$ and $f_1$ such that
\begin{align}
\begin{aligned}
    \widetilde{F}_\alpha(s,0)=F(s,0)&=f_0(s)\\
    \widetilde{F}_\alpha(s,1)=F(s,1)&=f_1(s)\\
    \widetilde{F}_\alpha(0,t)=F(0,t)&=f_0(0)\\
    \widetilde{F}_\alpha(1,t)=F(1,t)&=f_1(1)
\end{aligned}
\end{align}
then we call $\widetilde{F}_\alpha$ an $\alpha$-homotopy from $f_0$ to $f_1$, and write 
\begin{equation}
    f_0{\stackrel{\sim}{\to}}_\alpha f_1
\end{equation}

When they satisfy 
\begin{equation}
    f_0{\stackrel{\sim}{\to}}_\alpha f_1~\text{and}~~f_1{\stackrel{\sim}{\to}}_\alpha f_0,
\end{equation}
then we write $f_0\simeq_\alpha f_1$.
\begin{proposition}
\label{prop:const}
Suppose $f_0{\stackrel{\sim}{\to}}_\alpha f_1$ is satisfied. Then $f_0\simeq_\alpha f_1$ is true if and only if $\alpha(F(s,t))$ is constant with respect to $t$ for any homotopy $F$ between $f_0$ and $f_1$. 
\end{proposition}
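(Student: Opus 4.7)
The plan is to exploit the asymmetry in the definition of $\widetilde{F}_\alpha$: the only way for $\alpha \circ F(s,\cdot)$ to be non-increasing both as we go from $f_0$ to $f_1$ and as we go back is if it is constant. I will interpret the quantifier ``any homotopy $F$ between $f_0$ and $f_1$'' as ranging over $\alpha$-admissible homotopies, i.e.\ those for which $\widetilde{F}_\alpha = F$, since this is the class of $F$'s for which the statement is meaningful given the preceding definitions.

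For the forward direction, I would assume $f_0 \simeq_\alpha f_1$, so there exist $\alpha$-homotopies $F$ from $f_0$ to $f_1$ and $G$ from $f_1$ to $f_0$. Evaluating the non-increasing condition at the endpoints gives, for every $s$,
\begin{equation}
\alpha(f_0(s)) = \alpha(F(s,0)) \ge \alpha(F(s,1)) = \alpha(f_1(s)),
\end{equation}
and symmetrically $\alpha(f_1(s)) \ge \alpha(f_0(s))$ from $G$, so $\alpha(f_0(s)) = \alpha(f_1(s))$ for all $s$. Then for an arbitrary $\alpha$-admissible homotopy $F$ from $f_0$ to $f_1$, the chain $\alpha(f_0(s)) \ge \alpha(F(s,t)) \ge \alpha(f_1(s))$ is squeezed between equal bounds, forcing $\alpha(F(s,t))$ to be constant in $t$.

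For the reverse direction, I would use the hypothesis $f_0 \stackrel{\sim}{\to}_\alpha f_1$ to pick an $\alpha$-admissible homotopy $F$ from $f_0$ to $f_1$; by assumption $\alpha(F(s,t))$ is constant in $t$. Then the time-reversed homotopy $\bar{F}(s,t) := F(s,1-t)$ is a continuous homotopy from $f_1$ to $f_0$, and $\alpha(\bar{F}(s,t)) = \alpha(F(s,1-t))$ is again constant in $t$, hence (trivially) non-increasing. Thus $\widetilde{\bar{F}}_\alpha = \bar{F}$, i.e.\ $f_1 \stackrel{\sim}{\to}_\alpha f_0$, which together with the assumption gives $f_0 \simeq_\alpha f_1$.

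The main obstacle is really a reading issue rather than a technical one: if ``any homotopy $F$'' were meant literally (any continuous homotopy, not just $\alpha$-admissible ones), the forward direction would be asserting that $\alpha$ is constant on every interpolating path between $f_0$ and $f_1$, which is a much stronger topological statement about $(X,\alpha)$ and is not implied by $f_0 \simeq_\alpha f_1$ alone. I would therefore open the proof by making the quantifier explicit, after which both implications reduce to the simple squeezing argument above.
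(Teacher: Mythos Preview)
Your argument is correct and rests on the same core idea as the paper's proof: the non-increasing condition applied in both directions forces $\alpha(f_0(s))=\alpha(f_1(s))$, after which the chain $\alpha(f_0(s))\ge\alpha(F(s,t))\ge\alpha(f_1(s))$ collapses. Your reading of ``any homotopy $F$'' as $\alpha$-admissible is exactly how the paper uses $F$ in its proof (it invokes the non-increasing property of $F$), so there is no discrepancy there.

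The only real difference is packaging. The paper argues the forward direction by contradiction: it assumes $\alpha(F(s,t))$ is not constant and that an $\alpha$-homotopy $H$ from $f_1$ to $f_0$ exists, derives the two chains of inequalities, and then reaches a contradiction. Along the way it inserts a step (``for any $(s,t)$ there exists $(s,t')$ with $H(s,t)=F(s,t')$'') that is neither needed nor obviously justified. Your direct version --- first pin down $\alpha(f_0(s))=\alpha(f_1(s))$ from the two endpoint inequalities, then squeeze --- is shorter and avoids that detour. For the reverse direction you use the time-reversed homotopy $\bar F(s,t)=F(s,1-t)$; the paper does not spell this out here but uses exactly this construction in the proof of the next proposition, so again you are aligned.
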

\begin{proof}
    If $\alpha$ is constant, it is clear that $f_1{\stackrel{\sim}{\to}}_\alpha f_0$. 
    
    Suppose $\alpha(F(s,t))$ is not constant. We show that there is no $\alpha$-homotopy $\widetilde{H}_\alpha$ from $f_1$ to $f_0$. For this, we show that assuming the existence of such an $\alpha$-homotopy leads to a contradiction. Let $\widetilde{H}_{\alpha}(s,t)$ be an $\alpha$-homotopy from $f_1$ to $f_0$. Since $\alpha(H(s,t))$ is an non-increasing function of $t$, we have $\alpha(f_1(s))\ge \alpha(H(s,t))\ge\alpha(f_0(s))$ for all $s\in [0,1]$ and $t\in[0,1]$. Let $\widetilde{F}_\alpha$ be an $\alpha$-homotopy from $f_0$ to $f_1$. Since $\alpha(F(s,t))$ is an non-increasing function of $t$, we have $\alpha(f_0(s))\ge \alpha(F(s,t))\ge\alpha(f_1(s))$ for all $s\in [0,1]$ and $t\in[0,1]$. Moreover since both $H$ and $F$ are continuous, for any $(s,t)\in[0,1]\times[0,1]$, there exists $(s,t')\in[0,1]\times[0,1]$ such that $H(s,t)=F(s,t')$.

    Since we assume that $\alpha(F(s,t))$ is not constant, there is $(s_*,t_*)\in[0,1]\times[0,1]$ such that $\alpha(f_0(s_*))>\alpha(F(s_*,t_*))$. Then there also exists $t'_*\in[0,1]$ such that $F(s_*,t_*)=H(s_*,t'_*)$ and $\alpha(f_0(s_*))>\alpha(H(s_*,t'_*))\ge\alpha(f_0(s_*))$. However such $\alpha(H(s_*,t'_*))\in\mathbb{R}$ dose not exist. Therefore there is no $(s,t)\in[0,1]\times[0,1]$ such that $\alpha(f_0(s))>\alpha(F(s,t))$. This contradicts that $\alpha(F(s,t))$ is not constant.  
\end{proof}

Let $x_0\in X$ be a point and $\Omega(X,x_0)=\{f:[0,1]\to X,f~\text{is continuous},~f(0)=f(1)=x_0\}$ be the loop space with basepoint $x_0$. 
\begin{proposition}
    The relation $\simeq_\alpha$ is an equivalence relation on $\Omega(X,x_0)$
\end{proposition}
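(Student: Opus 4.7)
The plan is to verify the three axioms of an equivalence relation — reflexivity, symmetry, and transitivity — exploiting the characterization provided by Proposition~\ref{prop:const}, which tells us that $f_0 \simeq_\alpha f_1$ holds precisely when there is a homotopy $F$ between $f_0$ and $f_1$ along which $\alpha(F(s,t))$ is constant in $t$ for every fixed $s$. This characterization is the key tool, since it replaces the two separate non-increasing conditions hidden inside $\simeq_\alpha$ with a single, manifestly symmetric condition that is easy to glue.

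For reflexivity, I would take the constant homotopy $F(s,t) = f(s)$ between $f$ and itself. Then $\alpha(F(s,t)) = \alpha(f(s))$ depends only on $s$, hence is (trivially) a non-increasing function of $t$ in either direction, so both $f \stackrel{\sim}{\to}_\alpha f$ and its reverse hold, giving $f \simeq_\alpha f$. Symmetry is immediate from the very definition of $\simeq_\alpha$, which is written as the conjunction $f_0 \stackrel{\sim}{\to}_\alpha f_1$ and $f_1 \stackrel{\sim}{\to}_\alpha f_0$; swapping the roles of $f_0$ and $f_1$ simply exchanges the two conjuncts.

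Transitivity is the only step that requires work. Suppose $f_0 \simeq_\alpha f_1$ and $f_1 \simeq_\alpha f_2$. By Proposition~\ref{prop:const}, there exist homotopies $F$ from $f_0$ to $f_1$ and $G$ from $f_1$ to $f_2$ such that $\alpha(F(s,t))$ and $\alpha(G(s,t))$ are constant in $t$ for each $s$. Evaluating at the endpoints gives $\alpha(f_0(s)) = \alpha(f_1(s)) = \alpha(f_2(s))$ for every $s \in [0,1]$. I would then form the standard concatenated homotopy
\begin{equation}
H(s,t) = \begin{cases} F(s, 2t), & 0 \le t \le 1/2, \\ G(s, 2t-1), & 1/2 \le t \le 1, \end{cases}
\end{equation}
which is continuous by the pasting lemma (the two definitions agree at $t = 1/2$ since $F(s,1) = f_1(s) = G(s,0)$) and satisfies $H(s,0) = f_0(s)$, $H(s,1) = f_2(s)$, $H(0,t) = x_0$, $H(1,t) = x_0$. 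Along $H$, the value $\alpha(H(s,t))$ equals $\alpha(f_0(s))$ everywhere, hence is constant in $t$. Applying Proposition~\ref{prop:const} again in the reverse direction, $H$ is then simultaneously an $\alpha$-homotopy from $f_0$ to $f_2$ and, by the constancy of $\alpha \circ H$, an $\alpha$-homotopy from $f_2$ to $f_0$, yielding $f_0 \simeq_\alpha f_2$.

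The only place where subtlety could appear is the gluing step in transitivity: one might worry that concatenating two homotopies whose $\alpha$-values are only non-increasing in $t$ could fail to remain non-increasing at the seam $t = 1/2$. This is exactly why I first extract the \emph{constancy} conclusion from Proposition~\ref{prop:const} before attempting to concatenate — once $\alpha \circ F$ and $\alpha \circ G$ are independent of $t$, the matching condition $F(s,1) = G(s,0)$ forces $\alpha(f_0(s)) = \alpha(f_2(s))$ and the concatenated homotopy is automatically $\alpha$-constant, eliminating any difficulty.
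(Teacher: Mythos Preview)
Your proof is correct and follows essentially the same route as the paper: both use the constant homotopy for reflexivity, invoke Proposition~\ref{prop:const}, and concatenate homotopies for transitivity. The only cosmetic differences are that you read symmetry off directly from the definition (the paper instead reverses the homotopy via Proposition~\ref{prop:const}), and that you extract $\alpha$-constancy before concatenating, making the reverse direction $f_2 \stackrel{\sim}{\to}_\alpha f_0$ explicit, whereas the paper leaves it implicit.
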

\begin{proof}
The reflexivity is clear. To show the symmetry relation, suppose $f_0\simeq_\alpha f_1$. Let $\widetilde{F}_\alpha(s,t)$ be such an $\alpha$-homotopy from $f_0$ and $f_1$. Due to Proposition~\ref{prop:const}, $\alpha(F(s,t))$ is constant. Then by puttting $H(s,t)=F(s,1-t)$, we find $\tilde{H}_{\alpha}$ is an $\alpha$-homotopy from $f_1$ to $f_0$.  

To show the transitivity, we assume $f_0\simeq_\alpha f_1$ and $f_1\simeq_\alpha f_2$. Let $\widetilde{F}_\alpha,\widetilde{H}_\alpha$ be their $\alpha$-homotopy. Then, we can create an $\alpha$-homotopy from $f_0$ to $f_2$ by
\begin{equation}
    \widetilde{G}_\alpha(s,t)=
    \begin{cases}
        \widetilde{F}_\alpha(s,2t)& 0\le t\le 1/2\\
        \widetilde{H}_\alpha(s,2t-1)& 1/2\le t\le 1
    \end{cases}
\end{equation}
\end{proof}

\begin{definition}
    We write $\pi_1(X,x_0,\alpha)$ for the quotient set of $\Omega(X,x_0)$ by $\simeq_\alpha$, and write $[f]_{(\alpha,x_0)}$ for an equivalent class of $f\in \Omega(X,x_0)$ by $\simeq_\alpha$. 
\end{definition}

\begin{corollary}
    $\pi_1(X,x_0,\alpha)$ is a group with the loop product. 
\end{corollary}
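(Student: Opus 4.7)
The plan is to adapt the classical proof that $\pi_{1}(X,x_{0})$ is a group to the present $\alpha$-equivalence setting. Define the product on equivalence classes by $[f]_{(\alpha,x_{0})}\cdot [g]_{(\alpha,x_{0})}:=[f\ast g]_{(\alpha,x_{0})}$ and check the four standard axioms: well-definedness of the product, existence of the identity (the constant loop $e_{x_{0}}$), existence of inverses $[f^{-1}]_{(\alpha,x_{0})}$ with $f^{-1}(s):=f(1-s)$, and associativity. Throughout, Proposition~\ref{prop:const} lets me replace $\simeq_{\alpha}$ by the stronger working criterion that there exists a homotopy $F$ along which $\alpha(F(s,t))$ is constant in $t$ for each $s$.

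For well-definedness, suppose $f_{0}\simeq_{\alpha}f_{1}$ and $g_{0}\simeq_{\alpha}g_{1}$ via $\alpha$-constant homotopies $F,G$. The side-by-side homotopy $H(s,t)=F(2s,t)$ on $s\in[0,1/2]$ and $H(s,t)=G(2s-1,t)$ on $s\in[1/2,1]$ connects $f_{0}\ast g_{0}$ to $f_{1}\ast g_{1}$, and for each $s$ is built from a single $\alpha$-constant slice of either $F$ or $G$, so Proposition~\ref{prop:const} yields $f_{0}\ast g_{0}\simeq_{\alpha}f_{1}\ast g_{1}$. For the identity and associativity axioms I would use the standard linear reparametrization homotopies from the classical fundamental-group proof, and for the inverse axiom the shrinking homotopy $F(s,t)=f(2s(1-t))$ on $[0,1/2]$ and $F(s,t)=f(2(1-s)(1-t))$ on $[1/2,1]$.

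The main obstacle is the $\alpha$-constancy check for the identity, inverse, and associativity axioms: unlike the side-by-side homotopy above, each of these deformations moves the slice $F(s,\cdot)$ along a segment of the underlying path rather than vertically, so constancy of $\alpha$ in $t$ is not automatic. To handle this, I would argue that within a $\simeq_{\alpha}$-class the two-directional requirement forces $\alpha$ to be constant along the image of any representative loop (this is precisely the mechanism used in the proof of Proposition~\ref{prop:const}, since $\alpha$ must be simultaneously non-increasing in $t$ and, via the reversed homotopy, non-decreasing in $t$), so that any reparametrization-type homotopy whose slices remain in that image is automatically $\alpha$-constant in $t$. Once this observation is in place, the four verifications go through in parallel with the classical argument, so $\pi_{1}(X,x_{0},\alpha)$ is a group under the loop product.
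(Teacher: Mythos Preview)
The paper offers no proof of this corollary, so your outline is already more detailed than anything the text provides. Your overall strategy—run the classical $\pi_1$ argument and use Proposition~\ref{prop:const} to certify each standard homotopy as an $\alpha$-homotopy—is the natural one, and the well-definedness step via the side-by-side homotopy is fine.

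The gap is in your justification that $\alpha$ is constant along the image of every loop. The ``two-directional'' mechanism you invoke from Proposition~\ref{prop:const} only yields that $\alpha(F(s,t))$ is constant in $t$ for each fixed $s$; it says nothing directly about constancy in $s$, which is what ``constant along the image of $f$'' means. You need an extra step to pass from $t$-constancy to constancy along the loop. One route, taking the ``for any homotopy'' clause of Proposition~\ref{prop:const} literally, is to apply it to $e_{x_0}\simeq_\alpha e_{x_0}$ with the self-homotopy of $e_{x_0}$ that first expands out to $f*f^{-1}$ and then contracts back: $t$-constancy together with $\alpha(G(s,0))=\alpha(x_0)$ then forces $\alpha\equiv\alpha(x_0)$ on the entire image of $f$. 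Once that is in hand, every reparametrization-type homotopy you use has each $t$-slice lying in a single level set of $\alpha$, and the remaining group axioms go through as you indicate.
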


Two continuous maps $f:X\to Y$ and $g:Y\to X$ are called $\alpha$-homotopy equivalence if there are $\alpha:X\to\mathbb{R}$ and $\beta:Y\to\mathbb{R}$ such that
\begin{align}
\begin{aligned}
    g\circ f&\simeq_\alpha id_X\\
    f\circ g&\simeq_\alpha id_Y.
\end{aligned}
\end{align}
If such $f$ and $g$ exist for $X$ and $Y$, we call $X$ and $Y$ are $\alpha$-homotopy equivalent ($X\simeq_\alpha Y$). 

\begin{proposition}
\label{prop:alphahomotopy}
    $\simeq_\alpha$ is an equivalence relation of topological spaces. 
\end{proposition}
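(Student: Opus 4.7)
The plan is to verify the three axioms of an equivalence relation on topological spaces, leaning on the toolkit (particularly Proposition~\ref{prop:const}) that was developed for $\simeq_\alpha$ on loops.

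\emph{Reflexivity.} For any space $X$, take $f=g=\mathrm{id}_X$ together with any constant auxiliary function $\alpha:X\to\mathbb{R}$. The trivial homotopy $F(x,t)=x$ makes $\alpha\circ F$ constant in $t$, hence non-increasing in both directions, so $\mathrm{id}_X\circ\mathrm{id}_X\simeq_\alpha\mathrm{id}_X$ and $X\simeq_\alpha X$.

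\emph{Symmetry.} If $f:X\to Y$, $g:Y\to X$ together with $\alpha_X,\alpha_Y$ witness $X\simeq_\alpha Y$, then the same data with $f\leftrightarrow g$ and $\alpha_X\leftrightarrow\alpha_Y$ witnesses $Y\simeq_\alpha X$; this is immediate from the definition of $\alpha$-homotopy equivalence.

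\emph{Transitivity.} Assume $X\simeq_\alpha Y$ via $(f,g;\alpha_X,\alpha_Y)$ and $Y\simeq_\alpha Z$ via $(h,k;\alpha_Y',\alpha_Z)$. The candidate equivalence is $h\circ f:X\to Z$ with inverse $g\circ k:Z\to X$. I would construct the required $\alpha$-homotopy $(g\circ k)\circ(h\circ f)\simeq_\alpha\mathrm{id}_X$ in two stages. First, push forward the $\alpha_Y'$-homotopy $H:Y\times[0,1]\to Y$ from $k\circ h$ to $\mathrm{id}_Y$ by conjugation with $f$ and $g$, producing $(x,t)\mapsto g(H(f(x),t))$, a homotopy from $g\circ k\circ h\circ f$ to $g\circ f$ in $X$. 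Second, concatenate this with the given $\alpha_X$-homotopy from $g\circ f$ to $\mathrm{id}_X$, using the piecewise reparametrization formula already employed in the transitivity proof for $\simeq_\alpha$ on loops.

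The main obstacle will be controlling the auxiliary function across the concatenation, since a priori the natural auxiliary on the pushforward segment is $\alpha_Y'\circ g$, which need not agree with $\alpha_X$. The key remedy is Proposition~\ref{prop:const}: because $g\circ f\simeq_{\alpha_X}\mathrm{id}_X$ is an equivalence (not just a one-sided $\overset{\sim}{\to}_\alpha$), $\alpha_X$ is constant along the second leg, and the analogous statement shows $\alpha_Y'$ is constant along $H$. Taking $\alpha_X$ as the auxiliary function on the full concatenated homotopy, the non-increasing condition reduces on each piece to the statement that a constant is non-increasing; the reverse $\alpha$-homotopy then exists by running the argument backwards, giving a true $\simeq_\alpha$ rather than just $\overset{\sim}{\to}_\alpha$. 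The symmetric construction in $Z$ produces $(h\circ f)\circ(g\circ k)\simeq_\alpha\mathrm{id}_Z$ with $\alpha_Z$ in place of $\alpha_X$, completing transitivity and hence the proof that $\simeq_\alpha$ is an equivalence relation on topological spaces.
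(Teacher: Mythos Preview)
Your proof follows the same skeleton as the paper: reflexivity and symmetry are declared trivial, and transitivity is obtained by composing the two given equivalences, $f''=h\circ f$ and $g''=g\circ k$, then building the required homotopy to the identity by first collapsing the inner $k\circ h$ (via conjugation by $f,g$) and then using the given $g\circ f\simeq_\alpha \mathrm{id}_X$. The paper's proof stops at asserting that this is ``straightforward to see'' and never mentions the auxiliary functions $\alpha_X,\alpha_Y,\alpha_Y',\alpha_Z$ at all.

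The one point where you go beyond the paper is your use of Proposition~\ref{prop:const} to argue that, because each $\simeq_\alpha$ is two-sided, the relevant $\alpha$ is constant in $t$ along each homotopy, so the non-increasing condition on the pushforward and on the concatenation is automatic. This is a genuine addition of rigor rather than a different route: the paper simply does not address how the $\alpha$-condition survives conjugation and gluing, whereas you do. One caveat is that Proposition~\ref{prop:const} is stated and proved for paths $[0,1]\to X$, not for homotopies of self-maps $X\to X$; you are invoking it ``analogously,'' which is reasonable at the paper's level of formality but is, strictly speaking, an extension of that proposition rather than a direct citation of it.
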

\begin{proof}
    The reflexive relation and the symmetric relation are trivial. We show the transitive relation ($X\simeq_\alpha Y$, $Y\simeq_\alpha Z\Rightarrow X\simeq_\alpha Z$.) By assumption, there are maps $f:X\to Y$, $f':Y\to Z$, $g:Y\to X$ and $g':Z\to Y$ such that $f\circ g\simeq_\alpha 1_Y$, $g\circ f\simeq_\alpha 1_X$, $f'\circ g'\simeq_\alpha 1_Z$ and $f'\circ g'\simeq_\alpha 1_Y$. By putting $f''=f'\circ f$ and $g''=g\circ g'$, it is strightforward to see
    \begin{align}
    \begin{aligned}
        f''\circ g''&=f'\circ(f\circ g)\circ g'\simeq_\alpha f'\circ 1_Y\circ g'=1_Z\\
        g''\circ f''&=g\circ(g'\circ f')\circ f\simeq_\alpha g'\circ 1_X\circ f=1_X.
    \end{aligned}
    \end{align}
\end{proof}
\begin{remark}
The classification based on $\alpha$-homotopy precludes topological spaces from being identified by a unphysical process. For example, when the energy of a topological space is used as $\alpha$, only those that can be mapped to each other while keeping their energy constant are considered equivalent. When attempting to deform a real object, physical or chemical parameters should be considered. 

One simple example of this is the difference in chemical properties due to the difference in structure between the cis (Fig.~\ref{fig:cis}~left) and trans (Fig.~\ref{fig:cis}~right) isomers. Since the two molecular formulas are exactly the same, differing only in the point of bonding, it would seem that they could be mapped to each other if one could move electrons continuously, but this is not quite possible in practice. In fact, the carbon-carbon double bond is so strong that it cannot rotate at room temperature, and it takes a certain amount of energy to make this possible. In general, the cis isomer is more energetically unstable than the trans isomer, so the transition from the cis isomer to the trans isomer can occur easily (and in some cases spontaneously), but the reverse is not true. 

\begin{figure}[H]
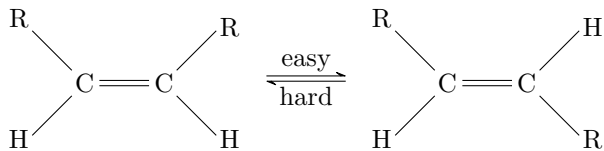

\setchemfig{scheme debug=false}
\begin{center}
\schemestart
\chemfig{R| -[7] C(-[5] \llap{H}) = C(-[7] {}H) -[1] {}R}
\arrow{<=>[\text{easy}][\text{hard}]}
\chemfig{R| -[7] C(-[5] \llap{H}) = C(-[7] {}R) -[1] {}H}
\schemestop
\caption{\label{fig:cis}Cis and trans isomers.}
\end{center}
\end{figure}
\end{remark}

\section{\label{sec:chemi}Algebraic Geometric Approach to Quantum Chemistry and Polymer Chemistry Based on Quantum Computation}
\subsection{Preliminaries}
With the advent and development of quantum computers, quantum chemistry is entering an extremely exciting era. While substances with high molecular weight behave in a classical mechanical manner, substances with small molecular weight exhibit a pronounced quantum mechanical behavior. It is meaningful to be able to simulate chemical properties using large-scale quantum computers in the near future. In addition to that, quantum chemistry will be of great interest to us to better understand theories of physics as an interdisciplinary field of Newtonian physics, quantum physics, statistical mechanics, and quantum information. So, within the scope of our theory, let us discuss reactions between molecules in terms of algebraic geometrical quantum calculations.

Below we comment on the theoretical framework. The following discussion in this section focuses on conceptual arguments at the expense of rigor. When defining quantum fibrations (Def.~\ref{def:fibration}), we did not assume homotopy equivalence between fibers. The implications of this will become clearer by examining more specific examples. There are many substances with different local physical and chemical properties, and local structures can be changed by chemical reactions.

\subsection{Quantum Model of Polymers}
A polymer is a molecule with high molecular weight that is composed of repeated linked units. The properties of polymers are determined by the chemical structure of the basic units and how they are geometrically linked to each other. Even when a polymer is composed of one type of monomer, various structures appear in a chain due to the different bonding of each monomer. Polymers consisting of two or more repeating units are called copolymers and are classified according to the way each unit is bonded. Polymers in which multiple units appear in succession are called block copolymers (Fig.\ref{fig:polymer} [Upper]), those in which the units are joined alternately are called alternating copolymers (Fig.\ref{fig:polymer} [Middle]), and those in which the units are joined randomly are called random copolymers (Fig.\ref{fig:polymer} [Lower]). In addition, there are some branched polymers and ring polymers. Furthermore, there are polymers that lose some of their regularity and have a steric structure.

\begin{figure}[H]
\begin{minipage}{\textwidth}
\centering
    \begin{tikzpicture}[node1/.style={draw=white, fill = orange, circle, minimum size=0.5cm}, node2/.style={draw=white, fill = blue!60, circle, minimum size=0.8cm}]
      \node[node1, line width=0.1mm] (s1) {};
      \node[node2, right = 0.25cm of s1, line width=0.1mm] (s2) {};
      \node[node1, left  = 0.25cm and 0.25cm of s1, line width=0.1mm] (s3) {};
      \node[node2, right  = 0.25cm and 0.25cm of s2, line width=0.1mm] (s4) {};
      \node[node1, left  = 0.2cm and 0.25cm of s3, line width=0.1mm] (s5) {};
      \node[node2, right  = 0.25cm and 0.25cm of s4, line width=0.1mm] (s6) {};
      \node[node1, left  = 0.25cm and 0.25cm of s5, line width=0.1mm] (s7) {};
      \node[node2, right  = 0.25cm and 0.25cm of s6, line width=0.1mm] (s8){};
      \node[node1, left  = 0.25cm and 0.25cm of s7, line width=0.1mm] (s9){};
      \node[node2, right  = 0.25cm and 0.25cm of s8, line width=0.1mm] (s10){};
        \path[-,thick, >=stealth]
        (s7) edge[below,orange,line width=1mm]  (s9)
        (s7) edge[below,orange,line width=1mm]  (s5)
        (s5) edge[below,orange,line width=1mm]  (s3)
        (s3) edge[below,orange, line width=1mm] (s1)
        (s1) edge[below,magenta, line width=1mm]  (s2)
        (s2) edge[below,blue!60,line width=1mm]  (s4)
        (s4) edge[below,blue!60,line width=1mm]  (s6)
        (s6) edge[below,blue!60,line width=1mm]  (s8)
        (s8) edge[below,blue!60,line width=1mm]  (s10);
    \end{tikzpicture}
\vspace{0.5cm}
\end{minipage}
\vspace{0.5cm}
\begin{minipage}{\textwidth}
\centering
    \begin{tikzpicture}[node1/.style={draw=white, fill = orange, circle, minimum size=0.5cm}, node2/.style={draw=white, fill = blue!60, circle, minimum size=0.8cm}]
      \node[node1, line width=0.1mm] (s1) {};
      \node[node2, right = 0.25cm of s1, line width=0.1mm] (s2) {};
      \node[node2, left  = 0.25cm and 0.25cm of s1, line width=0.1mm] (s3) {};
      \node[node1, right  = 0.25cm and 0.25cm of s2, line width=0.1mm] (s4) {};
      \node[node1, left  = 0.2cm and 0.25cm of s3, line width=0.1mm] (s5) {};
      \node[node2, right  = 0.25cm and 0.25cm of s4, line width=0.1mm] (s6) {};
      \node[node2, left  = 0.25cm and 0.25cm of s5, line width=0.1mm] (s7) {};
      \node[node1, right = 0.25cm and 0.25cm of s6, line width=0.1mm] (s8){};
      \node[node1, left = 0.25cm and 0.25cm of s7, line width=0.1mm] (s9){};
      \node[node2, right = 0.25cm and 0.25cm of s8, line width=0.1mm](s10){};
        \path[-,thick, >=stealth]
        (s7) edge[below,magenta, line width=1mm]  (s5)
        (s5) edge[below,magenta, line width=1mm]  (s3)
        (s3) edge[below,magenta, line width=1mm] (s1)
        (s1) edge[below,magenta, line width=1mm]  (s2)
        (s2) edge[below,magenta, line width=1mm]  (s4)
        (s4) edge[below,magenta, line width=1mm]  (s6)
        (s6) edge[below,magenta, line width=1mm]  (s8)
        (s7) edge[below,magenta, line width=1mm]  (s9)
        (s8) edge[below,magenta, line width=1mm]  (s10);
    \end{tikzpicture}
\end{minipage}
\begin{minipage}{\textwidth}
\centering
\begin{tikzpicture}[node1/.style={draw=white, fill = orange, circle, minimum size=0.5cm}, node2/.style={draw=white, fill = blue!60, circle, minimum size=0.8cm}]
      \node[node2, line width=0.1mm] (s1) {};
      \node[node2, right = 0.25cm of s1, line width=0.1mm] (s2) {};
      \node[node1, left  = 0.25cm and 0.25cm of s1, line width=0.1mm] (s3) {};
      \node[node1, right  = 0.25cm and 0.25cm of s2, line width=0.1mm] (s4) {};
      \node[node2, left  = 0.25cm and 0.25cm of s3, line width=0.1mm] (s5) {};
      \node[node2, right  = 0.25cm and 0.25cm of s4, line width=0.1mm] (s6) {};
      \node[node1, left  = 0.25cm and 0.25cm of s5, line width=0.1mm] (s7) {};
      \node[node1, right = 0.25cm and 0.25cm of s6, line width=0.1mm] (s8){};
     \node[node1, left = 0.25cm and 0.25cm of s7, line width=0.1mm] (s9){};
     \node[node2, right = 0.25cm and 0.25cm of s8, line width=0.1mm] (s10){};
        \path[-,thick, >=stealth]
        (s7) edge[below,magenta, line width=1mm]  (s5)
        (s5) edge[below,magenta, line width=1mm]  (s3)
        (s3) edge[below,magenta, line width=1mm] (s1)
        (s1) edge[below,blue!60, line width=1mm]  (s2)
        (s2) edge[below,magenta, line width=1mm]  (s4)
        (s4) edge[below,magenta, line width=1mm]  (s6)
        (s6) edge[below,magenta, line width=1mm]  (s8)
        (s7) edge[below,orange, line width=1mm]  (s9)
        (s8) edge[below,magenta, line width=1mm]  (s10);
    \end{tikzpicture}
\end{minipage}
\caption{[Upper] Block copolymer [Middle] Alternating copolymer [Lower] Random copolymer}
\label{fig:polymer}
\end{figure}
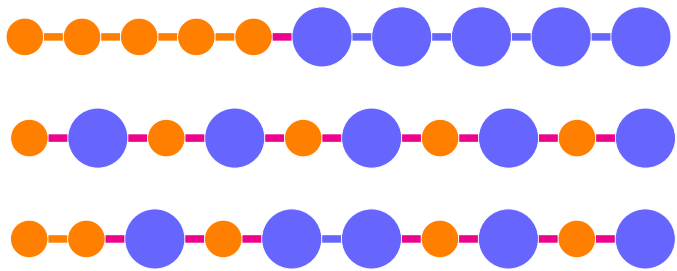

Determining the electronic structure of polymers is of various importance in terms of applications. For example, it can reveal the electrical properties of polymer surfaces, such as their susceptibility to electrification. It will also lead to the elucidation of the physical and chemical properties of proteins, opening up important prospects in the fields of life sciences, medicine, and drug discovery. Furthermore, highly electrically conductive polymer compounds, called semiconducting polymers, have potential in engineering applications for organic field-effect transistors and organic thin-film solar cells.

Regarding the base space, since those polymers are roughly one-dimensional chains, they all appear to be homotopic to a single point in the conventional homotopy theory, but given their energy structures (see also Fig.~\ref{fig:evol}), it would be difficult to deform them continuously to a single point (Prop.~\ref{prop:alphahomotopy}):
\begin{equation}
    \text{Block copolymer}\not\simeq_\alpha\text{Alternating 
 copolymer}\not\simeq_\alpha\text{Random copolymer}
\end{equation}
The structure of the base space and the structure of the fibers are related to each other since the shape of the base space is attributed to chemical bonds of the electrons and molecules associated with it.

We consider a model in which the Hilbert space may be different for each neighborhood. Let $X$ be a base space covered by finitely many non-overlapping open sets $X=\bigcup_{i}U_i,~U_i\cap U_j=\emptyset~(i\neq j)$ and the Hilbert space $\mathcal{H}_i$ of each neighbor $U_i$ is $\mathbb{C}^{n_i}$, where $n_i$ is a positive integer. Such a model can be easily implemented using a large scale universal quantum computer, since it can be represented by $n_i$ spin-1/2 particles coupled to site $i$. 

As a simple but practical example, we consider a polymer which consists of two basic units $A$ and $B$. Let $\text{label}=\{1,\cdots,N\}$ be the set of all labels of coordinates. We divide it into the two ordered set $\text{label}_A=\{a_1,\cdots,a_{n_A}\}$ of all labels of $A$ and that  $\text{label}_B=\{b_1,\cdots,b_{n_B}\}$ of $B$. So they obey $N=n_AN_A+n_BN_B$, $\text{label}=\text{label}_A\cup\text{label}_B$ and $\text{label}_A\cap\text{label}_B=\emptyset$. We further divide $\text{label}_A$ into two ordered sets $\text{link}_{AA}(\subset \text{label}_A)$ and $\text{link}_{AB}(\subset\text{label}_A)$ that accommodate information of links between the neighboring sites, and $\text{label}_B$ into two ordered sets $\text{link}_{BB}(\subset\text{label}_B)$ and $\text{link}_{BA}(\subset\text{label}_B)$ containing date of links between two neighboring sits as follows:
\begin{align}
\begin{aligned}
    a_i\in\text{link}_{AA}&\Longleftrightarrow a_i, a_i+1\in\text{label}_{A}\\
    a_i\in\text{link}_{AB}&\Longleftrightarrow a_i\in\text{label}_A, a_i+1\in\text{label}_{B}\\
    b_i\in\text{link}_{BB}&\Longleftrightarrow b_i, b_i+1\in\text{label}_{B}\\
    b_i\in\text{link}_{BA}&\Longleftrightarrow b_i\in\text{label}_B, b_i+1\in\text{label}_{A}
\end{aligned}
\end{align}
Note that $\text{link}_{AA}$ and $\text{link}_{AB}$, and $\text{link}_{BB}$ and $\text{link}_{BA}$ have overlaps, respectively. 

Then a generic Hamiltonian formalism of this polymer can be constructed as 
\begin{align}
\begin{aligned}
    H_\text{polymer}=&\sum_{a_i\in\text{label}_{AA}} \left(J^{AA}_{a_i}\widetilde{H}^A_{a_i}\widetilde{H}^A_{a_i+1}+h.c.\right)+\sum_{a_i\in\text{label}_{AB}}\left(J^{AB}_{a_i}\widetilde{H}^A_{a_i}\widetilde{H}^B_{a_i+1}+h.c.\right)\\
    &+\sum_{b_i\in\text{label}_{BB}}\left(J^{BB}_{b_i}\widetilde{H}^B_{b_i}\widetilde{H}^B_{b_i+1}+h.c.\right)+\sum_{b_i\in\text{label}_{BA}}\left(J^{BA}_{b_i}\widetilde{H}^B_{b_i}\widetilde{H}^A_{b_i+1}+h.c.\right)\\
    &+\sum_{a_i\in\text{label}_{A}}h^A_{a_i}H^A_{a_i}+\sum_{b_i\in\text{label}_{B}}h^B_{b_i}H^B_{b_i},
\end{aligned}
\end{align}
where $J^{AB}_{a_i}$ gives a coupling between two units $A$ at $a_i$ and $B$ at $a_{i}+1$, $H^A_{a_i}$ is a Hamiltonian acting on the Hilbert space of $A$ at $a_i$ and $\widetilde{H}^A_{a_i}$ is an operator of interacting term, for example. $H^A_{a_i}$ and $\widetilde{H}^A_{a_i}$ should have the same dimension. When the dimensions of the Hilbert spaces of $A$ and $B$ are $N_A$ and $N_B$, respectively, and do not depend on their locations, then the Hilbert space of this entire system is $\mathcal{H}_{tot}=\mathbb{C}^{n^{N_A}_A\times n^{N_B}_B}$. 

\subsection{Quantum Computational Chemistry}
We consider a tight-binding model consisting of the nearest-neighbor interactions, in which the type and number of coupled particles varies from bond to bond. The ground state of this Hamiltonian can be obtained by quantum annealing~\cite{1998PhRvE..58.5355K}
\begin{align}
\begin{aligned}
    H(t)&=f(t)H_\text{polymer}+(1-f(t))H_0
\end{aligned}
\end{align}
where $H_0=-\sum_{i=1}^{N}X_i$ is the sum of the Pauli $X$ operators at the $i$th site and $f(t)$ is a monotonically increasing continuous function from 0 to 1 with fixed initial $f(0)=0$ and end $f(1)=1$. 

The time evolution of the system is given by $U(t)=\exp\left(-i\int_0^tH(t)dt\right)$. We can construct a quantum fibration $\mathscr{F}$ on $X$ as follows. The initial state is the ground state $\ket{\psi}_{in}=\prod_{i}\ket{+}_i$ of $H_0$, where $\ket{+}_i$ is an eigenstate of $X_i$: $X_i\ket{+}_i=\ket{+}_i$. So the time evolution of the density operator of the entire system is 
\begin{equation}
    \rho(t)=U(t)\ket{\psi}_{in}\bra{\psi}_{in}U^*(t). 
\end{equation}
By restricting $\rho(t)$ on each $U_i$, we obtain a family of quantum states $\{\rho(t)|_{U_i}\}_{0\le t\le1}$ on $U_i$, which is a quantum orbit on $U_i$. Note that the quantum state at initial time is a pure state with no quantum correlations and interactions, and the local algebra is completely determined by local properties only. As the calculation progresses, the interaction between particles becomes non-negligible as the value of $f(t)$ increases from 0, and the effect of non-local interactions appears in the local algebra. 

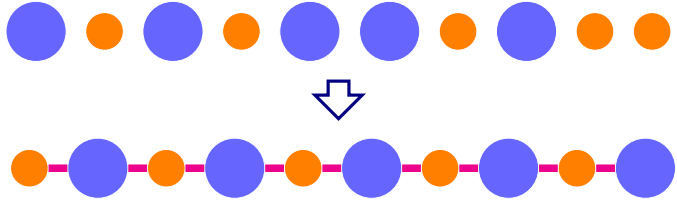
\begin{figure}[H]
\begin{minipage}{\textwidth}
\centering
    \begin{tikzpicture}[node1/.style={draw=white, fill = orange, circle, minimum size=0.5cm}, node2/.style={draw=white, fill = blue!60, circle, minimum size=0.8cm}]
      \node[node2, line width=0.1mm] (s1) {};
      \node[node2, right = 0.25cm of s1, line width=0.1mm] (s2) {};
      \node[node1, left  = 0.25cm and 0.25cm of s1, line width=0.1mm] (s3) {};
      \node[node1, right  = 0.25cm and 0.25cm of s2, line width=0.1mm] (s4) {};
      \node[node2, left  = 0.2cm and 0.25cm of s3, line width=0.1mm] (s5) {};
      \node[node2, right  = 0.25cm and 0.25cm of s4, line width=0.1mm] (s6) {};
      \node[node1, left  = 0.25cm and 0.25cm of s5, line width=0.1mm] (s7) {};
      \node[node1, right  = 0.25cm and 0.25cm of s6, line width=0.1mm] (s8){};
      \node[node2, left  = 0.25cm and 0.25cm of s7, line width=0.1mm] (s9){};
      \node[node1, right  = 0.25cm and 0.25cm of s8, line width=0.1mm] (s10){};
    \end{tikzpicture}
\vspace{0.2cm}
\end{minipage}
\begin{minipage}{\textwidth}
\centering
\begin{tikzpicture}
\node[single arrow, draw=blue!50!black, fill=none, 
      minimum width = 3pt, single arrow head extend=5pt,
      minimum height=5mm, very thick, rotate=270] {}; 
\end{tikzpicture}
\vspace{0.2cm}
\end{minipage}
\begin{minipage}{\textwidth}
\centering
    \begin{tikzpicture}[node1/.style={draw=white, fill = orange, circle, minimum size=0.5cm}, node2/.style={draw=white, fill = blue!60, circle, minimum size=0.8cm}]
      \node[node1, line width=0.1mm] (s1) {};
      \node[node2, right = 0.25cm of s1, line width=0.1mm] (s2) {};
      \node[node2, left  = 0.25cm and 0.25cm of s1, line width=0.1mm] (s3) {};
      \node[node1, right  = 0.25cm and 0.25cm of s2, line width=0.1mm] (s4) {};
      \node[node1, left  = 0.2cm and 0.25cm of s3, line width=0.1mm] (s5) {};
      \node[node2, right  = 0.25cm and 0.25cm of s4, line width=0.1mm] (s6) {};
      \node[node2, left  = 0.25cm and 0.25cm of s5, line width=0.1mm] (s7) {};
      \node[node1, right = 0.25cm and 0.25cm of s6, line width=0.1mm] (s8){};
      \node[node1, left = 0.25cm and 0.25cm of s7, line width=0.1mm] (s9){};
      \node[node2, right = 0.25cm and 0.25cm of s8, line width=0.1mm](s10){};
        \path[-,thick, >=stealth]
        (s7) edge[below,magenta, line width=1mm]  (s5)
        (s5) edge[below,magenta, line width=1mm]  (s3)
        (s3) edge[below,magenta, line width=1mm] (s1)
        (s1) edge[below,magenta, line width=1mm]  (s2)
        (s2) edge[below,magenta, line width=1mm]  (s4)
        (s4) edge[below,magenta, line width=1mm]  (s6)
        (s6) edge[below,magenta, line width=1mm]  (s8)
        (s7) edge[below,magenta, line width=1mm]  (s9)
        (s8) edge[below,magenta, line width=1mm]  (s10);
    \end{tikzpicture}
\end{minipage}
\caption{Time evolution from non-interacting molecules to alternating copolymer.}
\label{fig:altcopolymer}
\end{figure}

Now we consider a transition from an initial state in which there is no interaction between particles to a state in which there is interaction between particles (Fig.~\ref{fig:altcopolymer}). The initial Hamiltonian is 
\begin{align}
\begin{aligned}
     H_0&=\sum_{i=1}^{N}H_i,~~H_i=
     \begin{cases}
     H^A_{i}&i\in\text{label}_{A}\\
     H^B_{i}&i\in\text{label}_{A}
     \end{cases}
\end{aligned}
\end{align} 

Let $\mathcal{M}=\left\{\{(J_i,h_i)\}_{i=1}^{N}: J_{i}\in\{0,1,J^{AA}_i,J^{AB}_i,J^{BA}_i,J^{BB}_i\}, h_i\in\{0,1,h^A_i,h^B_i\}\right\}$ be the set of all families of all link parameters defined on $X$. Then a general Hamiltonian can be 
\begin{equation}
\label{eq:dynamics}
    H(t)=\sum_{i=1, C,D\in\{A,B\}}^{N-1}J_i(t)\left(\widetilde{H}^{C}_i\widetilde{H}^{D}_{i+1}+h.c.\right)+\sum_{i=1,C\in\{A,B\}}^Nh_i(t)\widetilde{H}^{C}_i
\end{equation}
where $J_i(t)$ is an element of $\{0,1,J^{AA}_i,J^{AB}_i,J^{BA}_i,J^{BB}_i\}$ and $h_i$ is an element of $\{0,1,h^A_i,h^B_i\}$ such that $J_i(0)=0$ for all $i\in\text{label}$ and $h_i(0)=h^A_i$ if $i\in\text{label}_A$ and $h_i(0)=h^B_i$ if $i\in\text{label}_B$. At the end of evolution $t=1$, we require $J_i(1)$ and $h_i(1)$ generate the configuration of the alternating copolymer. 

There is no creation or annihilation of particles during the entire process of time variation, and the total Hilbert space $\mathcal{H}_{tot}=\bigotimes_{i=1}^N\mathcal{H}_i$ on $X$ is identical, but interactions cause changes in the structure of the local Hilbert space as well as the local algebra:
\begin{align}
\begin{aligned}
\label{eq:transition}
    \mathscr{F}^{t=0}_{U_i}(\mathcal{H}_i)&\to \mathscr{F}^{t=1}_{U_i}(\mathcal{H}_{tot})\\
    \mathcal{A}^{t=0}_{U_i}(\mathcal{H}_i)&\to \mathcal{A}^{t=1}_{U_i}(\mathcal{H}_{tot}),
\end{aligned}
\end{align}
where $\mathcal{A}^{t=1}_{U_i}(\mathcal{H}_{tot})$ is a subalgebra of the algebra $\mathcal{A}^{t=1}_{X}(\mathcal{H}_{tot})$ including the effects of interactions. Note that at the initial time, the local algebra is completely determined by the local property since there are no interactions and correlations among particles. Even if the Hamiltonian consists only of nearest-neighbor interactions, after a sufficient amount of time, local information will spread throughout the system. Therefore, local quantities and local algebras are determined by global properties of the system. If the final state is energetically stable and close to a thermal equilibrium state, there is no return from the final state to the initial state since the local Hilbert space of the final state is not isomorphic to the local Hilbert space of the initial state. This embedding of a local initial structure into the global structure can be regarded as a one-way map. 

Figure~\ref{fig:evol} illustrates this time evolution conceptually. To have a chemical reaction, reactants must once be in a state that facilitates recombination between atoms (activated state), and the energy required to achieve this state is called the activation energy. Catalysts, which remain unchanged after a reaction, facilitate chemical reactions by lowering the activation energy. Therefore it plays a role of  a one-way map. For example, chiral catalysts make it easier to synthesize certain substances. In the middle of a reaction, the reactant weakly bonds with the catalyst to form another compound, and the reaction proceeds via this reaction intermediate.

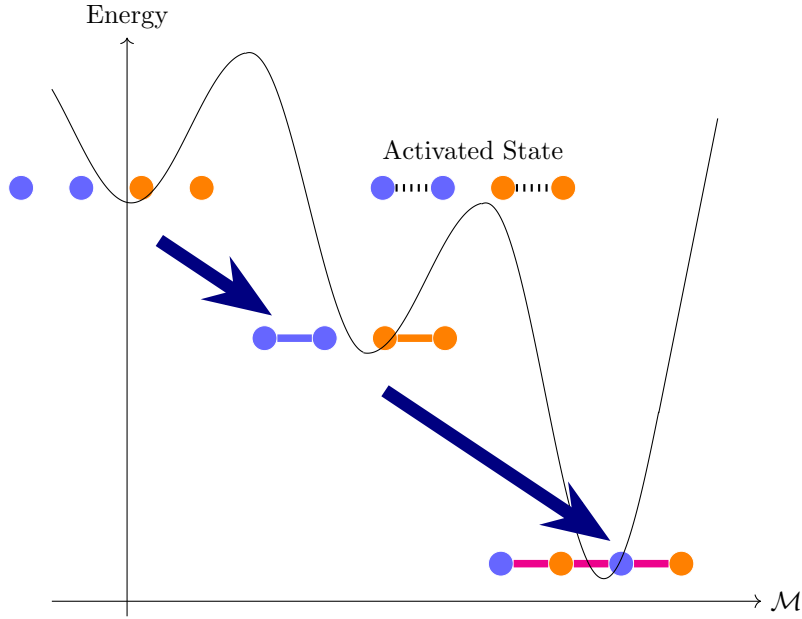
\begin{figure}[H]
    \centering
    \begin{tikzpicture}[node1/.style={draw=white, fill = orange, circle, minimum size=0.2cm}, node2/.style={draw=white, fill = blue!60, circle, minimum size=0.32cm}]
    \node[node2, line width=0.1mm] at (-1+0.37*pi,6.2){};
    \node[node2, line width=0.1mm] at (-0.2+0.37*pi,6.2){};
    \node[node1, line width=0.1mm] at (0.6+0.37*pi,6.2){};
    \node[node1, line width=0.1mm] at (1.4+0.37*pi,6.2){};
    \node[node2, line width=0.1mm] at (1.4*pi-1,4.2)(s1){};
    \node[node2, line width=0.1mm] at (1.4*pi-0.2,4.2)(s3){};
    \node[node1, line width=0.1mm] at (1.4*pi+0.6,4.2)(s5){};
    \node[node1, line width=0.1mm] at (1.4*pi+1.4,4.2)(s7){};
    \path[-,thick, >=stealth]
        (s1) edge[below,blue!60, line width=1mm]  (s3)
        (s5) edge[below,orange, line width=1mm]  (s7);
    \node[node2, line width=0.1mm] at (2.4*pi-1,1.2)(t1){};
    \node[node1, line width=0.1mm] at (2.4*pi-0.2,1.2)(t3){};
    \node[node2, line width=0.1mm] at (2.4*pi+0.6,1.2)(t5){};
    \node[node1, line width=0.1mm] at (2.4*pi+1.4,1.2)(t7){};
        \path[-,thick, >=stealth]
        (t1) edge[below,magenta,line width=1mm]  (t3)
        (t3) edge[below,magenta,line width=1mm] (t5)
        (t5) edge[below,magenta,line width=1mm]  (t7);
    \draw[very thin,color=gray] (-0.1,-1.1);
    \draw[->] (0.5*pi-1,0.7) -- (10,0.7) node[right] {$\mathcal{M}$};
    \draw[->] (0.5*pi,0.5) -- (0.5*pi,8.2) node[above] {Energy};
    \draw[color=black] node[right] {}; 
    \draw [domain=0.5*pi-1:pi, samples=100] plot (\x, {cos(2*\x r-2*pi)+7});
    \draw [domain=pi:1.5*pi, samples=100] plot (\x, {2*cos(2*\x r-2*pi)+6});
    \draw [domain=1.5*pi:2*pi, samples=100] plot (\x, {cos(2*\x r-2*pi)+5});
    \draw [domain=2*pi:2.75*pi, samples=100] plot (\x, {2.5*cos(2*\x r-2*pi)+3.5});
    \draw [domain=2.75*pi:3*pi, samples=100] plot (\x, {5*(\x-2.75*pi)+2.5*cos(2*2.75*pi-2*pi)+0.75});
    \draw [blue!50!black, -Stealth, line width=5pt] (2,5.5) -- (3.5,4.5);
    \draw [blue!50!black, -Stealth, line width=5pt] (5,3.5) -- (8,1.5);
    \node[node2, line width=0.1mm] at (-1+1.9*pi,6.2)(u1){};
    \node[node2, line width=0.1mm] at (-0.2+1.9*pi,6.2)(u2){};
    \node[node1, line width=0.1mm] at (0.6+1.9*pi,6.2)(u3){};
    \node[node1, line width=0.1mm] at (1.4+1.9*pi,6.2)(u4){};
    \node at (0.2+1.9*pi,6.7) {Activated State};
    \path[-,thick, >=stealth,dotted]
        (u1) edge[below,black,line width=1mm]  (u2)
        (u3) edge[below,black,line width=1mm] (u4);
  \end{tikzpicture}
    \caption{Energy change from a non-interacting state to an interacting state. The system is determined by a family of coupling parameters $\{(J_i,h_i)\}_i\in\mathcal{M}$.}
    \label{fig:evol}
\end{figure}

\bibliographystyle{amsalpha}
\bibliography{ref}
\end{document}